\begin{document}

\newtheorem{theorem}{Theorem}[section]
\newtheorem{proposition}[theorem]{Proposition}
\newtheorem{definition}[theorem]{Definition}
\newtheorem{corollary}[theorem]{Corollary}
\newtheorem{lemma}[theorem]{Lemma}
\newtheorem{question}[theorem]{Question}

\renewcommand{\c}{\mathfrak{c}}
\newcommand{\pr}[1]{\left\langle #1 \right\rangle}
\newcommand{\mR}{\mathcal{R}}
\newcommand{\RR}{\mathbb{R}}
\newcommand{\NN}{\mathbb{N}}
\newcommand{\mA}{\mathcal{A}}
\newcommand{\mV}{\mathcal{V}}
\newcommand{\mU}{\mathcal{V}}
\newcommand{\mP}{\mathcal{P}}
\newcommand{\mD}{\mathcal{D}}
\newcommand{\mB}{\mathcal{B}}
\newcommand{\C}{\mathrm{C}}
\newcommand{\mC}{\mathcal{C}}
\newcommand{\mO}{\mathcal{O}}
\newcommand{\mM}{\mathcal{M}}
\newcommand{\mF}{\mathcal{F}}
\newcommand{\D}{\mathrm{D}}
\newcommand{\0}{\mathrm{o}}
\newcommand{\OD}{\mathrm{OD}}
\newcommand{\Do}{\D_\mathrm{o}}
\newcommand{\sone}{\mathsf{S}_1}
\newcommand{\gone}{\mathsf{G}_1}
\newcommand{\gfin}{\mathsf{G}_\mathrm{fin}}
\newcommand{\sfin}{\mathsf{S}_\mathrm{fin}}
\newcommand{\Em}{\longrightarrow}
\newcommand{\menos}{{\setminus}}
\newcommand{\w}{{\omega}}

\title{Topological games and productively countably tight
spaces}
\author{Leandro F. Aurichi \and Angelo Bella}
\thanks{This work was done during a visit of the first author to
the University of Catania, sponsored by GNSAGA}

\maketitle

\begin{abstract}
  The two main results of this work are the following: if a space
$X$ is such that player II has a winning strategy  in the game
$\gone(\Omega_x, \Omega_x)$ for every $x \in X$, then $X$ is
productively countably tight. On the other hand, if a space is
productively countably tight, then $\sone(\Omega_x, \Omega_x)$
holds for every $x \in X$. With these results, several other
results follow, using some characterizations made by Uspenskii
and Scheepers.
\end{abstract}

\section{Introduction}

Recall that a topological space $X$ is said to have {\bf
countable
tightness at a point $x \in X$} if, for every subset $A \subset
X$
such that $x \in \overline A$, there is a subset $B \subset A$
such that $x \in \overline B$ and $B$ is countable. If $X$ has
countable tightness at every point $x$, then we simply say that
$X$ has countable tightness or even that $X$ is countably tight.
The tightness does not have a good behavior  in products. It is
well known that the square of a space  of countable tightness
may
fail to have countable tightness.   An internal characterization
of those spaces $X$   such that $X \times Y$ has countable
tightness for every countably tight $Y$ was  given by
Arhangel'skii (\cite{ArhangelFrequency}). Although
Arhangel'skii's
result   works for all values of the tightness, here we will
focus
on the countable case only. Let us say that a topological space
$X$  is {\bf productively countably tight} if, for every
countably
tight space $Y$, $X \times Y$ has countable tightness. Similarly,
$X$ is productively countably tight at a point $x\in X$  provided
that,  for any space $Y$ which has countable tightness at a point
$y\in Y$, the product $X\times Y$ has countable tightness at
$\langle x,y \rangle$. In this work, we will show the relation of
this productive property with some topological games. Let us
introduce the game notation that we will use. Given two families
$\mA, \mB$, we use $\sone(\mA, \mB)$ if, for every sequence
$(A_n)_{n \in \w}$ of elements of $\mA$, one can select $a_n \in
A_n$ such that $\{a_n: n \in \w\} \in \mB$. Similarly, we use the
notation $\gone(\mA, \mB)$ for the game played between player I
and II in such a way that, for every inning $n \in \w$, player I
chooses a member $A_n \in \mA$. Then player II chooses $a_n \in
A_n$. Player II is declared the winner if, and only if, $\{a_n:
\in \w\} \in \mB$. For this matter, we will use the following
families:
\begin{itemize}
\item $\Omega$: the collection of all open $\w$-coverings for
a space (recall that $\mC$ is a $\w$-covering if, for every $F
\subset X$ finite, there is a $C \in \mC$ such that $F \subset
C$);
\item $\Omega_x$: the collection of all sets $A$ such that $x
\notin A$ and $x \in \overline A$.
\end{itemize}

In the second section we will prove that if player II has a
winning strategy  in the game $\gone(\Omega_x, \Omega_x)$ for
every $x \in X$, then $X$ is productively countably tight. On the
other hand, if $X$ is productively countably tight, then $X$ has
the property $\sone(\Omega_x, \Omega_x)$ for every $x \in X$.
Recall that $\sone(\Omega_x,\Omega_x)$ means that $X$ has
countable strong fan tightness at $x$.

In the third section, we use some translations of the properties
used here to the spaces of the form $C_p(X)$. This kind of
translation allow us to show some new results, even ones that do
not involve spaces of the form $C_p(X)$. Like, per example, if
$X$ is a Tychonoff space and player II has a winning strategy  in
the game $\gone(\Omega, \Omega)$, then the $G_\delta$
modification of $X$ is Lindel\"of.

Finally, in the last section we present  some examples  in order
to show  that the implications made in the previous sections
cannot be reversed.

\section{Productively countably tight spaces}

According to Arhangel'skii \cite{ArhangelFrequency}, a
topological space $X$ is {\bf $\aleph_0$-singular} at a point $x
\in X$  provided that there  exists  a collection $\mP$ of
centered\footnote{A family $\mF$ is centered if, for every $A, B
\in \mF$, $A \cap B \neq \emptyset$.} families of countable
subsets of $X$ such that

\begin{enumerate}

\item\label{non triviality}  for any neighborhood $O_x$ of $x$
there exists $\mB \in \mP$ and $B \in \mB$ such that $B \subset
O_x$;
\item  for any $\{\mB_n:n < \omega\} \subset \mP$  we may pick
$B_n\in \mB_n$ in such a way that $x \notin \overline
{\bigcup\{B_n: n <\omega\}}$.
\end{enumerate}

\begin{theorem}[Arhangel'skii, \cite {ArhangelFrequency}, Theorem
3.4]
Given a Tychonoff space $X$ and a point $x \in X$, $X$ is
productively countably tight at $x$ if, and only if, $X$ is not
$\aleph_0$-singular at $x$.
\end{theorem}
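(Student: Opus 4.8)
The plan is to prove both implications in contrapositive form, reducing each to a single \emph{witnessing pair}: a space $Y$ that is countably tight at a point $y$ together with a set $A \subseteq X \times Y$ such that $\pr{x,y} \in \overline A$ but $\pr{x,y} \notin \overline C$ for every countable $C \subseteq A$. Existence of such a pair is exactly the failure of productive countable tightness at $x$, so the whole theorem becomes: such a pair exists if and only if $X$ is $\aleph_0$-singular at $x$. I would then match the defining data of $\aleph_0$-singularity against this pair, with nontriviality (condition (\ref{non triviality})) corresponding to $\pr{x,y} \in \overline A$ and condition (2) corresponding to the failure of countable convergence in the product.

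First I would handle the direction ``singular $\Rightarrow$ not productively countably tight'': given a witnessing family $\mP$, I would manufacture a countably tight space $Y$ directly from $\mP$, with a point $y$ coding the whole family, and take $A \subseteq X \times Y$ to be the natural graph linking each point $b \in B$ (for $B \in \mB$, $\mB \in \mP$) to the coordinate of $Y$ that codes $\mB$. Nontriviality forces $\pr{x,y} \in \overline A$, since inside any basic box $O_x \times V$ around $\pr{x,y}$ condition (\ref{non triviality}) supplies a family $\mB$ coded near $y$ with a member $B \subseteq O_x$, hence a point of $A$ in the box. For the failure of countable tightness, note that any countable $C \subseteq A$ meets only countably many families $\mB_n$; applying condition (2) to $\{\mB_n : n \in \w\}$ yields choices $B_n \in \mB_n$ with $x \notin \overline{\bigcup_n B_n}$, and these choices assemble into a neighbourhood of $\pr{x,y}$ disjoint from $C$, so no countable subset of $A$ accumulates at $\pr{x,y}$.

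For the converse, ``not productively countably tight $\Rightarrow$ singular'', I would start from a witnessing pair $(Y,y,A)$ and extract $\mP$. Since $\pr{x,y} \in \overline A$, for every neighbourhood $O_x$ of $x$ the point $y$ lies in the closure of the $Y$-projection of $A \cap (O_x \times Y)$; the countable tightness of $Y$ at $y$ is precisely what lets me replace that projection by a countable set while keeping $y$ in its closure, and choosing one point of each relevant column produces countable subsets of $X$ lying inside $O_x$. Grouping these into centred families gives a candidate $\mP$ for which nontriviality is inherited from $\pr{x,y} \in \overline A$, and condition (2) is read off from the hypothesis that no countable $C \subseteq A$ accumulates at $\pr{x,y}$: a neighbourhood box separating such a $C$ from $\pr{x,y}$ is exactly a choice of members whose union escapes the closure of $x$.

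The main obstacle, in both directions, is the coupling of the two coordinates. In the forward construction the crux is topologizing $Y$ so that it is \emph{genuinely} countably tight at $y$ while a single point $y$ still encodes all of $\mP$; condition (2) is itself an obstruction to $y$ being a countable-type limit of $\mP$, so $Y$ cannot simply be $\mP$ with one extra point, and the neighbourhood filter must be built from the countable members $B$ rather than from the families $\mB$ directly, which is where centredness is leveraged. In the extraction direction the subtlety is dual: a naive per-neighbourhood choice yields families satisfying nontriviality but \emph{failing} condition (2), precisely because those choices were manufactured to accumulate at $x$. Producing a single $\mP$ that satisfies (\ref{non triviality}) and (2) \emph{simultaneously} therefore requires a careful diagonalization that interleaves the shrinking neighbourhoods of $x$ with repeated uses of the countable tightness of $Y$ at $y$, and this interleaving is the technical heart of the argument.
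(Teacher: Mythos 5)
The paper does not actually prove this theorem; it is quoted from Arhangel'skii's paper \cite{ArhangelFrequency}, so there is no in-paper argument to compare yours against. Judged on its own, your proposal is an outline rather than a proof, and the two places where you defer the work are exactly the places where the theorem lives. In the direction ``$\aleph_0$-singular $\Rightarrow$ not productively countably tight'' you never define the topology on $Y$; you yourself observe that $Y$ cannot simply be ``$\mP$ with one extra point'' and that the neighbourhood filter at $y$ must be built from the countable members $B$, but without an explicit definition the two verifications you gesture at are not available. In particular, the step ``these choices assemble into a neighbourhood of $\pr{x,y}$ disjoint from $C$'' does not follow from condition (2) alone: a countable $C \subseteq A$ may project, within a single family $\mB_n$, onto points of many different members $B' \in \mB_n$ other than the chosen $B_n$, and those points can lie in every neighbourhood of $x$. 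They must be excluded through the $Y$-coordinate, which requires the (undefined) fan-type topology on $Y$; and the countable tightness of that $Y$ at $y$ --- the other half of the witnessing pair, and the place where centredness of each $\mB$ is actually used --- is asserted but never checked.

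The converse direction has the same status: you name a candidate $\mP$ (``grouping these into centred families'') without specifying the grouping or verifying centredness, and you explicitly concede that the naive per-neighbourhood choice fails condition (2) and that a ``careful diagonalization'' is needed, which you do not carry out. Identifying where the difficulty sits is not the same as resolving it. To turn this into a proof you would need, at minimum: (i) an explicit description of $Y$ (for instance, isolated points indexed by triples $(\mB, B, n)$ with $\mB \in \mP$, $B \in \mB$, $n \in \w$, and neighbourhoods of $y$ indexed by choice functions $\mB \mapsto B_\mB$), together with the centredness argument showing $Y$ is countably tight at $y$ and the verification that such a neighbourhood really misses the given countable $C$; and (ii) an explicit definition of the families extracted from a witnessing pair, with proofs of centredness and of condition (2). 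As it stands the proposal is a plausible road map with both bridges missing.
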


 With the help of this characterization, we will prove the
following:

\begin{theorem}\label{key}
  Let $X$ be a space such that player II has a winning strategy
$F$  in the game $\gone(\Omega_x, \Omega_x)$ for some $x \in X$.
Let $\mP$ be a family such that each element of $\mP$ is a
centered family of countable subsets of $X$ satisfying (\ref{non
triviality}) in the definition of $\aleph_0$-singularity.
Then, there is a family $(\mB_s)_{s \in \w^{< \w}}$ of
elements of $\mP$ such that for every choice $B_s \in \mB_s$ for
each $s \in \w^{< \w}$, $x \in \overline{\bigcup_{s \subset f}
B_s}$ for any $f \in \w^\w$.
\end{theorem}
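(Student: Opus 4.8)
The plan is to run the winning strategy $F$ against moves manufactured out of $\mP$, to record one family of $\mP$ at each node of $\w^{<\w}$, and to reduce the required closure statement to a meeting statement by means of centeredness. The guiding observation is this centeredness reduction: if $\mB\in\mP$ is centered and an open neighborhood $U$ of $x$ contains some member of $\mB$, then every member of $\mB$ meets that small member, hence meets $U$. Consequently, once the tree $(\mB_s)_{s\in\w^{<\w}}$ has been fixed, in order to get $x\in\overline{\bigcup_{s\subset f}B_s}$ for every selection $B_s\in\mB_s$ it is enough to guarantee the following: for every $f\in\w^\w$ and every neighborhood $U$ of $x$ there is an $n$ such that some member of $\mB_{f\upharpoonright n}$ is contained in $U$. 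Indeed, for such an $n$ every $B_{f\upharpoonright n}$ meets $U$, so the (countable) union meets $U$; as $U$ is arbitrary, $x$ lies in its closure.

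The next step is to manufacture legal moves for player I. Writing $S=\bigcup\bigcup\mP$ and $S_U=(S\cap U)\menos\{x\}$ for an open $U\ni x$, condition (\ref{non triviality}) yields $S_U\in\Omega_x$ for every such $U$: any smaller neighborhood $V\subseteq U$ contains a member $B\subseteq V$ of some family of $\mP$, and $\emptyset\neq B\subseteq S_U\cap V$, so $S_U$ meets every neighborhood of $x$ while omitting $x$. These localized sets are the inputs I feed to $F$, turning the problem about $\mP$ into a genuine play of $\gone(\Omega_x,\Omega_x)$.

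Then comes the recursion along $\w^{<\w}$, building finite plays consistent with $F$ together with the families $\mB_s$. At a node $s$ I have a finite play ending in player II's point $a_s=F(\dots)$; for the children $s^\frown k$ ($k\in\w$) I let player I play the moves $S_U$ as $U$ runs over a countable collection of neighborhoods refining around the responses produced so far, put $a_{s^\frown k}=F(\dots,S_U)\in S_U\subseteq U$, and invoke (\ref{non triviality}) to choose $\mB_{s^\frown k}\in\mP$ together with a member contained in $U$. Since $F$ is winning, every branch $f$ is a legal play, so $x\in\overline{\{a_{f\upharpoonright n}:n\in\w\}}$; equivalently, every neighborhood of $x$ contains some response $a_{f\upharpoonright n}$.

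The crux, and the point where the full strategy rather than merely the selection principle $\sone(\Omega_x,\Omega_x)$ is needed, is to convert this pointwise accumulation of the responses into the family-level covering demanded in the first paragraph. The difficulty is genuine precisely because $x$ need not be first countable, so along a single branch one cannot simply force the fed neighborhoods to shrink to a base; instead one must exploit that each response $a_{f\upharpoonright n}$ already sits inside the neighborhood attached to its node, and organize the branching so that for every neighborhood $U$ of $x$ the response landing in $U$ occurs at a node whose recorded family $\mB_{f\upharpoonright n}$ has a member inside $U$. Getting the bookkeeping of this diagonalization right, using the history dependence of $F$ to re-aim the countably many children at each level without tacitly assuming a countable character at $x$, is the main obstacle; once it is in place, the centeredness reduction finishes the argument.
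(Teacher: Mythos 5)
Your overall architecture --- recursion over $\w^{<\w}$, condition (\ref{non triviality}) to attach a family $\mB_s$ to each node, centeredness to pass from a designated member to an arbitrary member, and the winning strategy to force accumulation at $x$ --- matches the paper's, and your preliminary observations (the centeredness reduction as a \emph{sufficient} condition, and $S_U\in\Omega_x$) are correct. But the step you yourself flag as ``the main obstacle'' is exactly the step that carries the proof, and your reduction points it in an unreachable direction. You reduce the theorem to: for every branch $f$ and every neighborhood $U$ of $x$ there is an $n$ such that some member of $\mB_{f\upharpoonright n}$ is \emph{contained in} $U$. That says the designated members along each branch form a countable $\pi$-network at $x$, and nothing in your construction delivers it: what you actually get is that the single response $a_{f\upharpoonright n}$ eventually lands in $U$ (because the responses of a winning run cluster at $x$), whereas the member of $\mB_{f\upharpoonright n}$ you recorded sits inside whatever neighborhood you happened to feed at that node, which need not be inside $U$. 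Since $x$ need not be first countable, no re-aiming of the countably many children per node can target all neighborhoods of $x$ along every branch; so the ``bookkeeping of the diagonalization'' is not a technicality but the missing idea.

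The paper closes this gap with a different mechanism, resting on a small lemma you do not have: for any finite history $D_0,\dots,D_n$, the set of points that are \emph{not} of the form $F(D_0,\dots,D_n,D)$ for any $D\in\Omega_x$ cannot itself belong to $\Omega_x$ (feeding it to $F$ would give a contradiction), so some neighborhood $A$ of $x$ has every point of $A\menos\{x\}$ realizable as the next response. One then places an entire member $C_s\in\mB_s$ inside such an $A_s$, so that \emph{every} point of $C_s$ is a possible response; centeredness gives a point $x_n\in C_s\cap B_s$, and the move realizing $x_n$ tells you which child to descend to next. The selected points are then the responses of one legal run of $F$, and the winning condition alone --- with no shrinking of neighborhoods --- yields $x\in\overline{\{x_n:n\in\w\}}\subset\overline{\bigcup_s B_s}$. (Note that this produces accumulation along the one branch determined by the run, which is all the application to $\aleph_0$-singularity requires; your stronger ``for every branch'' target is not what the argument actually establishes.) To repair your write-up you must replace your reduction by this ``every point of a designated member of $\mB_s$ is a possible response'' lemma; as it stands, the proposal has a genuine gap.
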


In order to prove this Theorem, we will use the following
Lemma:

\begin{lemma}\label{there is A}
  Let $X$ be a space and let $F$ be a strategy for player II  in
the game $\gone(\Omega_x, \Omega_x)$ for some $x \in X$. Then,
for every sequence $D_0, ..., D_n \in \Omega_x$, there is an open
set $A$ such that $x \in A$ and, for every $a \in A \setminus
\{x\}$, there is a $D_a \in \Omega_x$ such that $F(D_0, ..., D_n,
D_a) = a$.
\end{lemma}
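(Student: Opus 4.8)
The plan is to argue by contradiction, exploiting the fact that the collection of responses player II can be forced into is ``large'' near $x$. Fix the history $D_0, \dots, D_n \in \Omega_x$ and collect all responses that player II, following $F$, can be made to give at the next inning:
\[
R = \{F(D_0, \dots, D_n, D) : D \in \Omega_x\}.
\]
The conclusion of the lemma is precisely that some open neighborhood $A$ of $x$ satisfies $A \setminus \{x\} \subseteq R$, since the clause ``there is $D_a \in \Omega_x$ with $F(D_0, \dots, D_n, D_a) = a$'' is just a restatement of $a \in R$. So the whole task reduces to producing such a neighborhood.

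First I would assume, toward a contradiction, that no such $A$ exists. Then every open $O \ni x$ must contain a point of $(X \setminus R) \setminus \{x\}$; writing $S = (X \setminus R) \setminus \{x\}$, this says exactly that $x \in \overline{S}$. Since $x \notin S$ by construction, we conclude $S \in \Omega_x$, i.e.\ $S$ is a legal move for player I at inning $n+1$. The point of introducing $S$ is that it packages ``the neighborhood of $x$ cannot be cleaned of non-responses'' into a single legal move.

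The key step is then to play $S$ against $F$. Consider the run in which player I chooses $D_0, \dots, D_n$ and then $S$. Because a response to a chosen set must belong to that set, $F(D_0, \dots, D_n, S) \in S$; but by the very definition of $R$ we also have $F(D_0, \dots, D_n, S) \in R$. This produces a point of $S \cap R$, contradicting $S \cap R = \emptyset$. Hence the neighborhood $A$ must exist, and for each $a \in A \setminus \{x\}$ the $D_a \in \Omega_x$ witnessing $a \in R$ is exactly the set required by the statement. (It is worth noting that this argument never uses that $F$ is \emph{winning}; it relies only on $F$ being a strategy, so that responses are legal.)

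I expect the only delicate point to be the bookkeeping around the excluded point $x$: one must keep $x$ out of $S$ so that $S$ is genuinely a member of $\Omega_x$, while simultaneously arguing that the failure of the conclusion is exactly what forces $x \in \overline{S}$. Once $S \in \Omega_x$ is in hand, the contradiction is immediate from the single rule that player II's response to a set lies inside that set.
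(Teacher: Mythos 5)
Your proposal is correct and is essentially the paper's own argument: your set $S=(X\setminus R)\setminus\{x\}$ is exactly the set $B$ of non-responses used in the paper, and in both cases the contradiction comes from feeding this set to $F$ as player I's next move and noting that the response must lie in $S\cap R=\emptyset$. The only difference is presentational (you phrase it as a reductio, the paper directly observes $B\notin\Omega_x$ and extracts the neighborhood $A$ disjoint from $B$), and your remark that the winning property of $F$ is never used is accurate.
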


\begin{proof}
  Let $B = \{y \in X \setminus \{x\}:$ there is no $D \in
\Omega_x$ such that $F(D_0, ..., D_n, D) = y\}$. Note that $B
\notin \Omega_x$ since, otherwise, $F(D_0, ..., D_n, B) \in B$
which is a contradiction. Thus, there is an open set $A$ such
that $x \in A$ and $A \cap B = \emptyset$.
\end{proof}

\begin{proof}[Proof of Theorem \ref{key}]
  We will define by induction over $s \in \w^{<\w}$ families
$(A_s)_{s \in \w^{<\w}}$, $(D_s)_{s \in \w^{<\w}}$, $(C_s)_{s \in
\w^{<\w}}$ and $(\mB_s)_{s \in \w}$ such that, for every $s \in
\w^{< \w}$:

  \begin{enumerate}
  \item $A_s$ is an open set such that $x \in A_s$, $D_s \in
\Omega_x$, $\mB_s \in \mP$ and $C_s \in \mB_s$;
  \item $C_s \subset A_s$;
  \item $C_s = \{F(D_{(s(0))}, ..., D_s, D_{s \frown n}): n \in
\w\}$.
  \end{enumerate}

Using Lemma \ref{there is A}, let $A_\emptyset$ be an open
neighborhood of $x$ such that, for every $a \in A_\emptyset
\setminus \{x\}$, $a = F(D)$ for some $D \in \Omega_x$. Let
$\mB_\emptyset \in \mP$ be such that there is $C_\emptyset \in
\mB_\emptyset$ such that $C_\emptyset \subset A_\emptyset$.
Enumerate $C_\emptyset = \{c_n : n \in \w\}$. Let $D_{(n)}$ be
such that $c_n = F(D_{(n)})$. Now suppose all these families are
defined until $s$ and $D_{s \frown n}$ is also defined for each
$n \in \w$. Fix $n \in \w$. Using Lemma \ref{there is A}, there
is an open neighborhood $A_{s \frown n}$ of $x$ such that, for
every $a \in A_{s \frown n} \setminus \{x\}$, there is a $D \in
\Omega_x$ such that $a = F(D_{s(0)}, ..., D_{s \frown n}, D)$.
Let $\mB_{s \frown n} \in \mP$ be such that there is $C_{s \frown
n} \in \mB_{s \frown n}$ such that $C_{s \frown n} \subset A_{s
\frown n}$. Enumerate $C_{s \frown n} = \{c_k: k \in \w\}$. Let
$D_{s \frown n \frown k}$ be such that $c_k = F(D_{s(0)}, ...,
D_{s \frown n}, D_{s \frown n \frown k})$. Thus, the induction is
complete.

Now, let us show that $(\mB_s)_{s \in \w^{< \w}}$ has the desired
property. For every $s \in \w^{< \w}$, fix $B_s \in \mB_s$. Let
$f \in \w^\w$. We will show that $x \in \overline{\bigcup_{s
\subset f} B_s}$. Let $x_0 \in C_\emptyset \cap B_\emptyset$. By
our construction, there is $g(0) \in \w$ such that $x_0 =
F(D_{(g(0))})$. Then, let $x_1 \in C_{f(0)} \cap B_{f(0)}$.
Again, there is a $g(1) \in \w$ such that $x_1 = F(D_{(g(0))},
D_{(g(0), g(1))})$.  By proceeding in  this manner, since $F$ is
a winning strategy, we have  $x \in \overline{\{x_n : n \in \w\}}
\subset \overline{\bigcup_{s \subset f} B_s}$.
\end{proof}

Thus, using the characterization  of Arhangel'skii, we obtain:

\begin{corollary}\label{game implies prod}
  If $X$ is a Tychonoff space such that player II has a winning
strategy  in the game $\gone(\Omega_x, \Omega_x)$, then $X$ is
productively countably tight at $x$.
\end{corollary}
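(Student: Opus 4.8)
The plan is to read off the corollary from Arhangel'skii's characterization (the theorem quoted above) together with Theorem \ref{key}, via a short argument by contradiction. Since by that characterization it suffices to prove that a Tychonoff $X$ carrying a winning strategy for player II in $\gone(\Omega_x,\Omega_x)$ is \emph{not} $\aleph_0$-singular at $x$, I would suppose toward a contradiction that $X$ \emph{is} $\aleph_0$-singular at $x$ and fix a collection $\mP$ of centered families of countable subsets of $X$ witnessing this, i.e.\ satisfying conditions (\ref{non triviality}) and (2) in the definition.

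The first step is to observe that $\mP$ feeds directly into Theorem \ref{key}: condition (\ref{non triviality}) of $\aleph_0$-singularity is exactly the hypothesis that each element of $\mP$ is a centered family of countable sets satisfying the non-triviality requirement. Applying Theorem \ref{key} (whose hypothesis on the winning strategy is met) then yields a family $(\mB_s)_{s \in \w^{<\w}}$ of members of $\mP$ with the property that, \emph{for every} choice $B_s \in \mB_s$ and \emph{every} branch $f \in \w^\w$, one has $x \in \overline{\bigcup_{s \subset f} B_s}$.

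The decisive step is then to play this against condition (2). Because the index set $\w^{<\w}$ is countable, $(\mB_s)_{s \in \w^{<\w}}$ is a countable subfamily of $\mP$, so condition (2) supplies a single choice $B_s \in \mB_s$ (over all $s \in \w^{<\w}$) for which $x \notin \overline{\bigcup_{s \in \w^{<\w}} B_s}$. Fixing any $f \in \w^\w$ and using $\bigcup_{s \subset f} B_s \subseteq \bigcup_{s \in \w^{<\w}} B_s$, the conclusion of Theorem \ref{key} forces $x \in \overline{\bigcup_{s \subset f} B_s} \subseteq \overline{\bigcup_{s \in \w^{<\w}} B_s}$, contradicting the choice just made. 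Hence $X$ is not $\aleph_0$-singular at $x$, and the corollary follows by Arhangel'skii's theorem.

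I do not expect a serious obstacle here, since the genuine work has already been carried out in Theorem \ref{key}; the corollary is essentially a dictionary translation. The only points demanding care are verifying that the family $\mP$ witnessing $\aleph_0$-singularity satisfies precisely the hypothesis of Theorem \ref{key}, and noticing that the countability of $\w^{<\w}$ is exactly what makes condition (2) applicable to the family produced by that theorem.
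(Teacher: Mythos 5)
Your proposal is correct and is precisely the argument the paper intends: the paper states the corollary as an immediate consequence of Theorem \ref{key} and Arhangel'skii's characterization, and your contradiction argument (feeding a putative $\aleph_0$-singularity witness $\mP$ into Theorem \ref{key} and then using the countability of $\w^{<\w}$ to invoke condition (2)) is exactly the omitted dictionary translation. No gaps; the handling of the countable subfamily and the inclusion $\bigcup_{s \subset f} B_s \subseteq \bigcup_{s \in \w^{<\w}} B_s$ is all that is needed.
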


Notice that the above corollary is sharp,  as Proposition
\ref{contro} will show, \emph{i.e.}, this result is not true with
the weaker hypothesis of player I not having a winning strategy.

Two  nice classes of spaces which are productively countably
tight  are that of  bisequential spaces (see
\cite{ArhangelFrequency})  and
that of compact Hausdorff spaces of countable tightness  (see
 \cite{Malyhin}). Recall that a space  $X$ is bisequential at
$x\in X$  provided that for any ultrafilter $\mathcal {F}$
converging to
$x$ there is a decreasing  family $\{A_n:n\in \w\}\subset
\mathcal {F}$
which converges to $x$.

One could suppose that  in these spaces  player II  must have a
winning
strategy in $\gone(\Omega_x, \Omega_x)$, but it turns out  that 
this is not the case. The key point is the following:
\begin {proposition}  \label{new} \cite[Theorem
15]{Scheepers2014} 
If $X$ is a  separable $T_3$ space and player II has
a winning strategy in
$\gone(\Omega_x, \Omega_x)$, 
then $X$ is first countable at the point $x$. \end{proposition}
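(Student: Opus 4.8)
The plan is to produce an \emph{explicit countable neighborhood base} at $x$ and to exploit the winning strategy $F$ only to certify that this base is rich enough. First I would dispose of the trivial case: if $x$ is isolated then $\{x\}$ is a base, so assume $x$ is not isolated and fix a countable dense set $D=\{d_n:n\in\w\}$ (here separability enters). Since $\w^{<\w}$, and more generally $D^{<\w}$, is countable, the whole game is to build a \emph{countable tree} of open neighborhoods of $x$ by letting the branching be indexed by dense points rather than by arbitrary members of $\Omega_x$. The role of Lemma~\ref{there is A} is to guarantee that, after any finite history of player~I's moves, a whole neighborhood of $x$ consists of points reachable by $F$, so that restricting attention to the dense points inside that neighborhood loses no essential information.

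Concretely, I would build a tree $T\subseteq D^{<\w}$ together with, for each node $s=(e_0,\dots,e_k)\in T$, an $\Omega_x$-history $H(s)=(D^0,\dots,D^k)$ with $F(D^0,\dots,D^i)=e_i$ for all $i\le k$, and an open $A_s\ni x$ supplied by Lemma~\ref{there is A} applied to $H(s)$, so that every point of $A_s\setminus\{x\}$ equals $F(H(s)^\frown D)$ for some $D\in\Omega_x$. Starting from the empty history I get $A_\emptyset$; the children of a node $s$ are exactly the sequences $s^\frown e$ with $e\in D\cap A_s\setminus\{x\}$, and for each such $e$ I fix (via the Lemma) some $D_e\in\Omega_x$ with $F(H(s)^\frown D_e)=e$, set $H(s^\frown e)=H(s)^\frown D_e$, and obtain $A_{s^\frown e}$. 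The family $\{A_s:s\in T\}$ is then countable, and I would close it under finite intersections (still countable) to get a candidate base.

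It remains to show $\{A_s:s\in T\}$ is a neighborhood base at $x$, which I would argue by contradiction. Suppose some open $U\ni x$ contains no $A_s$. Using regularity ($T_3$) pick open $V$ with $x\in V\subseteq\overline V\subseteq U$; then for every $s$, from $A_s\not\subseteq U$ we get $A_s\not\subseteq\overline V$, so $A_s\setminus\overline V=A_s\cap(X\setminus\overline V)$ is a nonempty \emph{open} set and hence meets $D$ in some point outside $\overline V$ (so $\ne x$). Following this, I would construct a branch $e_0,e_1,\dots$ with each $e_{k}\in D\cap A_{(e_0,\dots,e_{k-1})}\setminus\overline V$; by construction each $e_k$ is a legitimate child, and the associated $\Omega_x$-moves $D^0,D^1,\dots$ form a run of the game in which player~II follows $F$ and answers $e_0,e_1,\dots$. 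Since $\{e_k:k\in\w\}\subseteq X\setminus\overline V$ and $X\setminus V$ is closed with $x\in V$, we get $x\notin\overline{\{e_k:k\in\w\}}$, so $\{e_k\}\notin\Omega_x$ and player~II \emph{loses} — contradicting that $F$ is winning. Thus every $U\ni x$ captures some $A_s$, giving first countability at $x$.

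The main obstacle, and the place where all three hypotheses conspire, is precisely this last step: I need regularity to convert the failure $A_s\not\subseteq U$ into a \emph{nonempty open} set $A_s\setminus\overline V$ in which separability guarantees a dense witness to continue the branch, and I need the same closed separation $\overline V\subseteq U$ to ensure the resulting sequence of $F$-answers stays off a neighborhood of $x$ and so is a genuine loss. Without regularity, $A_s\setminus U$ need not be open (no dense point is forced there) and the outcome set need not avoid $\overline{\{e_k\}}\ni x$, so the contradiction would collapse.
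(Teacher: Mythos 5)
Your proof is correct, but note that the paper itself offers no proof of Proposition~\ref{new} to compare against: it is imported verbatim as Theorem~15 of \cite{Scheepers2014}. What you have written is therefore a self-contained argument where the paper has only a citation, and it holds up under scrutiny. The key steps all check out: Lemma~\ref{there is A} (applied also to the empty history, exactly as in the proof of Theorem~\ref{key}) legitimately supplies the open sets $A_s$; the tree $T\subseteq D^{<\w}$ is countable because $D$ is; along any branch the histories $H(s)$ are nested by construction, so the limit is a genuine run of $\gone(\Omega_x,\Omega_x)$ in which player~II follows $F$; and the separation $x\in V\subseteq\overline V\subseteq U$ does double duty, first making $A_s\setminus\overline V$ a nonempty open set that must meet $D$ in a legitimate child, and then trapping the resulting responses inside the closed set $X\setminus V$ so that $x\notin\overline{\{e_k:k\in\w\}}$ and player~II loses --- the desired contradiction. (Two cosmetic remarks: closing the family under finite intersections is unnecessary, since a neighborhood base need only be cofinal in the neighborhood filter; and the nonisolatedness reduction is likewise not really needed, since if $x$ is isolated the branch construction simply never gets off the ground because $A_s=\{x\}$ can be chosen.) Structurally your argument is the same tree-of-reachable-points technique the authors use to prove Theorem~\ref{key}, with the branching indexed by the countable dense set instead of by choices from $\mathcal{P}$; including it would make the paper more self-contained at essentially no cost.
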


Now, let
$Y$ be  the ``double arrows'' space and let $Z$ be obtained by
collapsing the diagonal of $Y\times Y$ to a point $p$.  The space
$Z$ is a compact separable bisequential (and hence countably
tight) space which is not first countable
at $p$. Therefore,     from Proposition \ref{new} we see that player 
 II cannot have a winning
strategy in $\gone(\Omega_p,\Omega_p)$.

It is worth mentioning  another nice consequence of Proposition \ref{new}.

\begin{corollary} Let $X$ be a $T_3$ space and $x\in X$. If
player II has a winning strategy in $\gone(\Omega_x, \Omega_x)$,
then $X$ is strictly Fr\`echet at $x$. \end{corollary}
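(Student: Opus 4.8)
The plan is to reduce the global statement to a separable subspace and then invoke Proposition \ref{new}. Recall that $X$ is strictly Fr\`echet at $x$ if for every decreasing sequence $(A_n)_{n\in\w}$ of subsets of $X$ with $x\in\bigcap_{n\in\w}\overline{A_n}$ there are points $a_n\in A_n$ with $a_n\to x$. Note first that a winning strategy for player II in $\gone(\Omega_x,\Omega_x)$ yields in particular the selection principle $\sone(\Omega_x,\Omega_x)$, hence countable tightness at $x$ (apply $\sone$ to the constant sequence $A_n=A$). So, given a decreasing sequence $(A_n)$ as above, I would first choose for each $n$ a countable $B_n\subseteq A_n$ with $x\in\overline{B_n}$, and set $Z=\{x\}\cup\bigcup_{n\in\w}B_n$. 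Then $Z$ is a countable --- in particular separable --- $T_3$ space, and it suffices to produce the points $a_n$ inside $Z$, since convergence to $x$ in the subspace $Z$ coincides with convergence to $x$ in $X$.

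The key step is that the hypothesis localizes to such subspaces: if $x\in Z\subseteq X$, then player II still has a winning strategy in the game $\gone(\Omega_x,\Omega_x)$ computed in $Z$. The point is that $\Omega_x^{Z}\subseteq\Omega_x^{X}$, because for $A\subseteq Z$ one has $\overline{A}^{Z}=\overline{A}^{X}\cap Z$, so $x\in\overline{A}^{Z}$ forces $x\in\overline{A}^{X}$ while $x\notin A$ is unchanged. Hence every legal move of player I in the $Z$-game is a legal move in the $X$-game, and I would let player II answer by the given strategy $F$; the responses $a_n$ lie in $Z$, and since $F$ is winning in $X$ we get $x\in\overline{\{a_n:n\in\w\}}^{X}$ with $x\notin\{a_n:n\in\w\}$, which (the set being contained in $Z$ and $x\in Z$) means $\{a_n:n\in\w\}\in\Omega_x^{Z}$. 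Thus $F$ restricts to a winning strategy for the $Z$-game.

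Applying Proposition \ref{new} to the separable $T_3$ space $Z$, I conclude that $Z$ is first countable at $x$; fix a decreasing neighbourhood base $(U_n)_{n\in\w}$ of $x$ in $Z$. Now $(A_n\cap Z)$ is decreasing with $B_n\subseteq A_n\cap Z$, so $x\in\overline{A_n\cap Z}^{Z}$ for every $n$, and I would pick $a_n\in A_n\cap Z\cap U_n$, which is nonempty since $x$ lies in the $Z$-closure of $A_n\cap Z$. Then $a_n\in A_n$ and $a_n\to x$ in $Z$, hence in $X$, which is exactly strict Fr\`echetness at $x$. I expect the only delicate point to be the heredity step of the second paragraph, namely checking carefully that a win of $F$ in $X$ is automatically a win in $Z$; once that is in place, the rest is the routine passage from first countability to the strong Fr\`echet property.
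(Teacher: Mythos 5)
Your proof is correct, and it takes a genuinely different (and in fact more self-contained) route than the paper. The paper first proves that $X$ is Fr\`echet at $x$: given $A$ with $x\in\overline A$, it extracts a countable $B\subset A$ by countable tightness, applies Proposition \ref{new} to the separable subspace $\overline B$ to get first countability there, and hence a convergent sequence; it then upgrades Fr\`echet to \emph{strictly} Fr\`echet by invoking, without further argument, the standard fact that Fr\`echetness together with $\sone(\Omega_x,\Omega_x)$ (countable strong fan tightness) yields strict Fr\`echetness. You instead treat the whole sequence $(A_n)$ at once: you collect countable witnesses $B_n\subset A_n$ into a single countable subspace $Z$, check that the winning strategy restricts to $Z$, get first countability of $Z$ at $x$ from Proposition \ref{new}, and read off the convergent selection $a_n\in A_n\cap U_n$ directly. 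This bypasses the ``Fr\`echet $+$ $\sone(\Omega_x,\Omega_x)$ $\Rightarrow$ strictly Fr\`echet'' step entirely, at the cost of spelling out the heredity of the strategy --- which you do correctly ($\Omega_x^Z\subseteq\Omega_x^X$, responses land in $Z$, and the winning set, being contained in $Z$, accumulates at $x$ in $Z$); the paper uses the same heredity silently when it applies Proposition \ref{new} to $\overline B$. Two small remarks: the definition you state (decreasing sequences) is really that of the \emph{strongly} Fr\`echet property, whereas strictly Fr\`echet requires the selection for arbitrary sequences $(A_n)$ with $x\in\bigcap_n\overline{A_n}$ --- but your argument never uses decreasingness, so it proves the strict version verbatim; and your opening observation that a winning strategy for II gives $\sone(\Omega_x,\Omega_x)$, hence countable tightness at $x$, matches how the paper gets countable tightness as well.
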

\begin{proof} First we show that $X$ is Fr\`echet at $x$. Let
$A\subset X$ and $x\in \overline A$. Since the tightness at $x$
is countable, there exists a countable set $B\subset A$ such that
$x\in \overline B$. Applying Proposition \ref{new} to the
separable subspace $Y=\overline B$, we see that $Y$ is first
countable at $x$ and so there is a sequence  in $B$ (and hence in
$A$) converging to $x$. Finally, by \ref{prod implies selection}
$\sone(\Omega_x, \Omega_x)$ holds and  so $X$ is strictly
Fr\`echet at $x$. \end{proof}

\begin{theorem}\label{prod implies selection}
  Let $X$ be a space. If $X$ is productively countably tight at
$p$, then  $\sone(\Omega_p, \Omega_p)$ holds, that is, $X$ has
countable strong fan tightness at $p$.
\end{theorem}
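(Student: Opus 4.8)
The plan is to argue through a carefully chosen test space for the productive hypothesis. Assume $X$ is productively countably tight at $p$; taking $Y$ a single point shows in particular that $X$ is countably tight at $p$. Fix a sequence $(A_n)_{n\in\w}$ of members of $\Omega_p$. Using countable tightness at $p$, I would first replace each $A_n$ by a countable subset still lying in $\Omega_p$ (a countable $B_n\subseteq A_n$ with $p\in\overline{B_n}$; note $p\notin B_n$ automatically), so without loss of generality each $A_n=\{a_{n,j}:j\in\w\}$ is countable. This reduction is harmless because $\sone(\Omega_p,\Omega_p)$ only asks for a countable selection and can be tested inside the subspace $\{p\}\cup\bigcup_n A_n$.

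Next I would build a single countably tight space $Y$ with a distinguished point $q$ and a set $W\subseteq X\times Y$ with $\pr{p,q}\in\overline W$, designed so that the ``columns'' of $Y$ are exactly the infinite selections. Let $Y=\{q\}\cup\big(\prod_{n\in\w}A_n\times\w\big)$, make every point $(\sigma,j)$ isolated, and let the neighbourhoods of $q$ be the fan sets $\{q\}\cup\{(\sigma,j):j\ge g(\sigma)\}$, one for each function $g$ assigning a natural number to each selection $\sigma$. A direct check gives that, for $P\subseteq Y\menos\{q\}$, one has $q\in\overline P$ if and only if some single column $\{\sigma\}\times\w$ meets $P$ in an infinite set; since such a column is countable, $Y$ is countably tight even though it is uncountable. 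Setting $W=\{\pr{\sigma(j),(\sigma,j)}:\sigma\in\prod_n A_n,\ j\in\w\}$ attaches to $(\sigma,j)$ the $j$-th coordinate of $\sigma$. To see $\pr{p,q}\in\overline W$, given a basic neighbourhood $U\times V$ of $\pr{p,q}$ I would choose a selection $\sigma$ with every coordinate in $U$ (possible since $p\in\overline{A_k}$ for all $k$); then an appropriate $(\sigma,j)$ falls in $V$ while $\sigma(j)\in U$.

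Now I would invoke the hypothesis: since $X\times Y$ is countably tight at $\pr{p,q}$, there is a countable $W'\subseteq W$ with $\pr{p,q}\in\overline{W'}$. Only countably many columns meet $W'$, so we obtain countably many candidate selections. Moreover, for every neighbourhood $U$ of $p$ the set $W'\cap(U\times Y)$ still has $q$ in the closure of its projection to $Y$, so by the fan property some column is infinite inside $U\times Y$; that is, some candidate selection $\sigma$ has infinitely many coordinates $\sigma(j)\in U$. The goal is to manufacture from this a \emph{single} selection whose chosen coordinates accumulate at $p$, which would contradict the assumed failure of $\sone(\Omega_p,\Omega_p)$ (each selection $\sigma$ has $p\notin\overline{\{\sigma(n):n\in\w\}}$).

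The hard part will be exactly this uniformization: the column supplied by the fan property depends on the neighbourhood $U$, and when $p$ has uncountable character one cannot simply diagonalize over a countable neighbourhood base. When $x$ has countable character the step is immediate, and such situations do arise (compare the separable case in Proposition \ref{new}). In general I would organize the countably many candidate columns as branches of the countable tree of finite partial selections obtained by taking initial segments, and then run a rank/fusion argument: the assumption that \emph{no} branch accumulates at $p$ equips each branch with a neighbourhood of $p$ avoiding its coordinates, and I would aim to show that this forces the tree to be well-founded in a sense incompatible with $\pr{p,q}\in\overline{W'}$, thereby isolating one genuine selection accumulating at $p$. This extraction of a single accumulating selection is the crux of the argument; the remaining verifications — countable tightness of $Y$, the membership $\pr{p,q}\in\overline W$, and the passage through the countable subspace — are routine.
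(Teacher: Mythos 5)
Your set-up is sound and runs parallel to the paper's: your $Y$ is (up to homeomorphism) the sequential fan $S_{\c}$ that the paper uses, the verification that $Y$ is countably tight, that $\pr{p,q}\in\overline W$, and the extraction of countably many candidate columns from a countable $W'$ are all correct. But the proof has a genuine gap exactly where you flag it: from ``for every neighbourhood $U$ of $p$ there is some candidate selection $\sigma_i$ with $\sigma_i(j)\in U$ for infinitely many $j$'' you must produce a \emph{single} selection $a_n\in A_n$ accumulating at $p$, and the ``rank/fusion argument on the tree of finite partial selections'' is only a hope, not an argument. The obstruction is that your columns are \emph{total} selections: for different neighbourhoods $U$ the witnessing column changes, and the infinite sets of positions $\{j:\sigma_i(j)\in U\}$ for the countably many surviving columns may interleave arbitrarily, so no assignment of positions to columns (each position can serve only one column) is guaranteed to preserve, for every $U$, infinitely many --- or even one --- position where the chosen column lands in $U$. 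Nothing in your construction controls where in $\w$ each column does its work.

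The paper closes precisely this gap by building the combinatorics into the test set before invoking countable tightness: it fixes an almost disjoint family $\{R_\alpha:\alpha<\c\}$ and attaches to the $\alpha$-th spine of the fan a \emph{partial} selection $E_\alpha=\{x_n^\alpha:n\in R_\alpha\}$ supported only on $R_\alpha$ (and contained in the $\alpha$-th member of a local base of $p$ in the countable subspace $\{p\}\cup\bigcup_n A_n$, which is what makes the set dense at $\pr{p,0}$). Then the conclusion of countable tightness reads: for every $U$, some surviving $E_\alpha$ meets $U$ in infinitely many of \emph{its own} positions $n\in R_\alpha$; since the countably many surviving $R_\alpha$'s can be disjointified with only finite changes, these partial selections glue into one total selection and the witnessing property survives. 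In your version this device is unavailable a posteriori, and it is not clear the implication you need is even true without it; to repair the proof you should index the columns of the fan by such partial selections rather than by all of $\prod_n A_n$.
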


 The thesis of the Theorem will actually follow by  supposing
only  that $X \times S_\c$ has countable tightness at $\pr{p, 0}$
where

$$S_\c = \bigcup_{\alpha < \c} \{z_n^\alpha : n < \omega\} \cup
\{0\}.$$

Where all the $z_n^\alpha$'s  are distinct and isolated in $S_\c$
and, if $f \in {}^\c\omega$, then
$$V(f) = \{0\} \cup \bigcup_{\alpha < \c} \{z_n^\alpha : n \geq
f(\alpha)\}$$
is a basic open neighborhood  at $0$. Observe that the sequential
fan $S_{ \frak c}$ is a space of countable tightness.

\begin{proof}[Proof of Theorem \ref{prod implies selection}]
Let $A_n\subset X$ and  $b\in \overline{A_n}$ for
all $n\in\omega$. As $X$ has countable tightness, we may assume
that the sets $A_n$ are countable. Put $Y=
\{p\}\cup {\bigcup_{n\in\omega}A_n}$.
 Fix a  collection $\mU=\{U_\alpha :\alpha < \c\}$ of
neighborhoods of $p$ in
${X}$ whose trace on $Y$ is a local base  at $p$ in the subspace
$Y$.
Next, fix an almost disjoint family $\mR=\{R_\alpha :\alpha
<\c\}$  of
infinite subsets of $\omega$.
For every $n\in R_\alpha $ pick $x_n^{\alpha } \in A_n
\cap U_\alpha $  in such a way that $x_n^{\alpha} \ne x_m^{\alpha
}$ whenever $n\ne m$ and let $E_\alpha =\{x_n^{\alpha }:n\in
R_\alpha \}$.

Now, we work in $X\times S_\frak c$. Let
$$
     A = \bigcup_{\alpha < { \frak c}}
               \{\langle x_n^\alpha,z_n^\alpha\rangle : n \in
R_\alpha
\}.
$$
We claim that $\langle p,0\rangle \in \overline{A}$. To see this,
let $U$ and
$V(f)$ be arbitrary neighborhoods of $p$ in $X$ and $0\in
S_{ \frak c}$,
respectively. There exists $\alpha < { \frak c}$ such that
$U_\alpha\cap Y \subset  U$ and so $E_\alpha\subset  U$.  Pick $n
\in
R_\alpha  $ so large that
 $n\ge f(\alpha)$. Then
$$
     \langle x_n^\alpha,z_n^\alpha\rangle \in (U\times V(f)) \cap
A,
$$
as required.

Since $X$ is productively countably tight,    we see that
 $X\times S_{ \frak c}$ has countable tightness at $\langle
p,0\rangle$. Therefore,    there is a
countable subset $F$ of ${ \frak c}$ such that if
$$
     B = \bigcup_{\alpha \in F}
               \{\langle x_n^\alpha,z_n^\alpha\rangle : n \in
R_\alpha
\},
$$
then $\langle p,0\rangle \in \overline{B}$.   We claim that

\begin{equation}\label{eq:*}
\text{for every $U$ neighborhood of $p$, $|U\cap E_\alpha
|=\aleph_0$ for some $\alpha \in F$.}
\end{equation}

Striving for a contradiction, assume that there is a neighborhood
$U$ of $p$ such that for every $\alpha\in F$, $|U\cap E_\alpha| <
\aleph_0$. Let $n_\alpha \in \omega$ be such that, if $n \in
R_\alpha \setminus   n_\alpha$,  then $x_n^\alpha \not \in U$.
Define $f\in  {}^{ \frak c}\omega $ as follows: $f(\alpha) =
n_\alpha$ if $\alpha \in F$ and $f(\beta) = 0$ otherwise.
Pick $\alpha \in F$ and $n \in R_\alpha  $ such that
$\pr{x_n^\alpha,z_n^\alpha} \in U\times V(f)$. Then
$x_n^\alpha\in U\cap E_\alpha$ and so $n < n_\alpha$.
However, $z_n^\alpha\in V(f)$ which implies $n \geq f(\alpha) =
n_\alpha$. This is a contradiction.

 Enumerate $F=\{\alpha _n:n<\omega\}$.  Then let $S_{\alpha
_0}=R_{\alpha _0}$ and $S_{\alpha _k}=R_{\alpha _k}\setminus
(R_{\alpha _0}\cup\cdots \cup R_{\alpha _{k-1}})$ for $k>0$.
The sets $S_{\alpha _k}$ are pairwise disjoint  and $S_{\alpha
_k}$     differs from $R_{\alpha _k}$  only in finitely many
points.
Let $n\in \omega$. If $n\in {S}_{\alpha _m}$ for some (single!)
$m$,
then put $a_n = x_n^{\alpha _m}$. Otherwise pick $a_n \in A_n$
arbitrarily. Let $U$ be an open neighborhood of $p$. By
$(\ref{eq:*})$, there is an $\alpha \in F$ such that $|U \cap
S_\alpha| = \aleph_0$. Thus, there is an $x_n^{\alpha_k} \in U$.
Note that $a_n = x_n^{\alpha_k}$ so we proved that $p \in
\overline{\{a_n: n \in \w\}}$.
\end{proof}

\begin{corollary}\label{prod implies s}
  If a space $X$ is productively countably tight, then
$\sone(\Omega_x, \Omega_x)$ holds for each $x\in X$.
\end{corollary}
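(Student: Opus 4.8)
The plan is to derive the global statement directly from Theorem \ref{prod implies selection} by exhibiting a single countably tight test space that witnesses the relevant product condition at every point of $X$ simultaneously. The natural candidate is the sequential fan $S_\c$ itself, which—as observed immediately after the statement of Theorem \ref{prod implies selection}—is a space of countable tightness.

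First I would fix an arbitrary point $x \in X$ and recall the remark accompanying Theorem \ref{prod implies selection}: its conclusion $\sone(\Omega_x, \Omega_x)$ was obtained using only the hypothesis that $X \times S_\c$ has countable tightness at the single point $\langle x,0\rangle$, rather than full productive countable tightness at $x$. Hence it suffices to verify this weaker local condition for each $x \in X$.

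Next, since $X$ is (globally) productively countably tight and $S_\c$ is countably tight, the product $X \times S_\c$ has countable tightness everywhere, not merely at one distinguished point. In particular, $X \times S_\c$ has countable tightness at $\langle x,0\rangle$. Feeding this into the argument already carried out in the proof of Theorem \ref{prod implies selection} yields $\sone(\Omega_x, \Omega_x)$. As $x$ was arbitrary, the selection property holds for every $x \in X$, which is precisely the assertion of the corollary.

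I do not expect a genuine obstacle here, as this is essentially a direct specialization: the only point requiring care is to invoke the \emph{pointwise} form of Theorem \ref{prod implies selection} together with the observation that one fixed countably tight factor, namely $S_\c$, serves all points of $X$ at once. This is exactly why the global hypothesis of productive countable tightness—rather than the a priori weaker pointwise notion—is what gets used, and it is what makes the passage from the theorem to the global corollary immediate.
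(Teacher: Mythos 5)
Your proof is correct and coincides with the paper's (implicit) derivation: the corollary follows by applying Theorem \ref{prod implies selection} --- more precisely, the accompanying observation that only countable tightness of $X \times S_\c$ at $\langle x,0\rangle$ is needed --- to each point $x \in X$. You also correctly handle the one genuine subtlety, namely that the global hypothesis is what lets a single everywhere-countably-tight test space $S_\c$ serve all points at once.
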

Scheepers has  pointed out (see p. 250-251 in \cite
{Scheepers1997} or Theorem 11 in \cite{Scheepers2014})   that
$\sone(\Omega_x,
\Omega_x)$ can be strictly weaker than player I not having  a
winning strategy in $\gone(\Omega_x, \Omega_x)$. So, the previous
results immediately suggest:
\begin{question} Let $X$ be a space and assume that $X$ is
productively countably tight at a point $x\in X$. Is it then true
that player I does not have a winning strategy in
$\gone(\Omega_x, \Omega_x)$? \end{question}

\section{Some applications}

Using some translations of properties from $X$ to $C_p(X)$ and
vice-versa, we get some applications for the theorems from the
previous section. We begin this section stating the translations
that we will use. Recall that, given a space $X$, we call the
{\bf $G_\delta$ modification} of $X$ (denoted by $X_\delta$) the
space where all the $G_\delta$'s of $X$ are declared open. Also,
we will use the notation $\0$ for the function constantly equal
to $0$.

\begin{theorem}[Scheepers \cite{Scheepers2014}]\label{Scheepers}
  Let $X$ be a Tychonoff space. Player II  has a winning strategy
in $\gone(\Omega,\Omega)$ played on $X$  if, and only if,  player
II has a winning strategy in
$\gone(\Omega_{\0},\Omega_{\0})$ played on $C_p(X)$.
\end{theorem}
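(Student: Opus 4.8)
The plan is to exploit the standard duality between open $\w$-coverings of $X$ and subsets of $C_p(X)$ accumulating at the constant function $\0$, and to transfer winning strategies of player II across this duality in both directions. Throughout I use the convention that an $\w$-covering does not contain $X$ itself, which forces a fact I will call ($\star$): \emph{if $\{U_n:n\in\w\}$ is an $\w$-covering, then every finite $G\subset X$ is contained in infinitely many of the $U_n$.} Indeed, if only $U_{n_0},\dots,U_{n_r}$ contained $G$, pick $x_i\in X\menos U_{n_i}$; then $G\cup\{x_0,\dots,x_r\}$ lies in some member of the covering, which must be one of the $U_{n_i}$, contradicting $x_i\notin U_{n_i}$. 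In particular, every cofinite subfamily of an $\w$-covering is again an $\w$-covering.

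For the direction from $X$ to $C_p(X)$, assume player II has a winning strategy $\sigma$ in $\gone(\Omega,\Omega)$ on $X$ and build a strategy $\tau$ for II in $\gone(\Omega_{\0},\Omega_{\0})$ on $C_p(X)$. At round $n$, when I has produced $A_0,\dots,A_n\in\Omega_{\0}$, convert $A_n$ into the open family $\mathcal U_n=\{\{x:|f(x)|<2^{-n}\}:f\in A_n\}$, discarding the value $X$; this is an $\w$-covering of $X$ precisely because $\0\in\overline{A_n}$. Set $U_n=\sigma(\mathcal U_0,\dots,\mathcal U_n)$ and let $\tau$ respond with a fixed $f_n\in A_n$ whose sublevel set is $U_n$. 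Since $\sigma$ is winning, $\{U_n:n\in\w\}$ is an $\w$-covering. Given a basic neighbourhood of $\0$ determined by finite $G\subset X$ and $\varepsilon>0$, choose $N$ with $2^{-N}<\varepsilon$; by ($\star$) the cofinite subfamily $\{U_n:n\ge N\}$ is still an $\w$-covering, so some $n\ge N$ has $G\subset U_n$, whence $|f_n|<2^{-n}<\varepsilon$ on $G$. As each $f_n\ne\0$ (because $\0\notin A_n$), this shows $\{f_n:n\in\w\}\in\Omega_{\0}$, so $\tau$ is winning.

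For the converse, assume II has a winning strategy $\rho$ in $\gone(\Omega_{\0},\Omega_{\0})$ on $C_p(X)$. Here the Tychonoff hypothesis supplies the translation: at round $n$, when I plays the $\w$-covering $\mathcal U_n$, convert it into $A_n=\{f_{U,K}:U\in\mathcal U_n,\ K\subset U\text{ finite}\}$, where $f_{U,K}\in C_p(X)$ takes values in $[0,1]$, equals $0$ on $K$, and equals $1$ on $X\menos U$; such functions exist since $K$ and the closed set $X\menos U$ are disjoint. Then $A_n\in\Omega_{\0}$: it witnesses $\0\in\overline{A_n}$ through the finite sets $K$, while $\0\notin A_n$ because each $f_{U,K}$ equals $1$ off the proper open set $U$. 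Feed $A_0,\dots,A_n$ to $\rho$, obtain $f_n=\rho(A_0,\dots,A_n)=f_{U_n,K_n}$, and respond with $U_n\in\mathcal U_n$. As $\rho$ is winning, $\0\in\overline{\{f_n:n\in\w\}}$; applied to the neighbourhood of $\0$ given by a finite $G$ and $\varepsilon=\tfrac12$, this yields an $n$ with $|f_n|<\tfrac12$ on $G$, and since $f_n\equiv 1$ on $X\menos U_n$ this forces $G\subset U_n$. Hence $\{U_n:n\in\w\}$ is an $\w$-covering and II wins.

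I expect the main obstacle to be the scale control in the first direction: a single $\w$-covering by sublevel sets records only smallness below one threshold, so it is not a priori clear that choosing one function per round yields a set clustering at $\0$ at \emph{every} scale. The device that overcomes this is to let the thresholds $2^{-n}$ tend to $0$ along the rounds and to invoke ($\star$), so that for each target $\varepsilon$ the relevant functions are supplied by a cofinite, hence still $\w$-covering, subfamily. Two points then need routine care: that the translations $A_n\mapsto\mathcal U_n$ and $\mathcal U_n\mapsto A_n$ depend only on the current round's data, so that the induced maps on histories are legitimate and the transferred objects are genuine strategies; and that the degenerate possibility, in round $n$, of some $f\in A_n$ already below $2^{-n}$ everywhere (which would make $\{x:|f(x)|<2^{-n}\}=X$) is absorbed---such an $f$ may simply be selected outright, since it only improves the clustering, while $\sigma$ is kept alive on a fixed auxiliary $\w$-covering. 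I would also check at the outset that, without loss of generality, every $\w$-covering in play consists of proper open subsets, so that ($\star$) applies.
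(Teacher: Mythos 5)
Your argument is correct, but be aware that the paper offers no proof of this statement at all: it is quoted from Scheepers' paper and used as a black box, so there is nothing in the text to compare your proof against. What you have written is the standard $C_p$-duality argument (essentially the one behind Scheepers' and Gerlits--Nagy's results): $\w$-covers of $X$ are translated into sets clustering at $\0$ via sublevel sets $\{x:|f(x)|<2^{-n}\}$ in one direction, and via Tychonoff separating functions $f_{U,K}$ in the other, and a winning strategy is pulled back across the translation round by round. Both directions check out, and your observation ($\star$) is exactly the right tool to turn the single $\w$-cover produced by $\sigma$ into clustering at every scale. Two points deserve to be promoted from your closing remarks into the body of the proof, since without them the argument as literally written has a false step: first, the claim that $\mathcal U_n$ is an $\w$-covering \emph{after} discarding the value $X$ can fail, so the degenerate-round device (select outright any $f\in A_n$ with $|f|<2^{-n}$ everywhere, feed $\sigma$ a fixed auxiliary cover on such rounds, and split into the cases of infinitely versus finitely many degenerate rounds) must be built into the definition of $\tau$, not appended afterwards; second, in the converse direction the same function may arise as $f_{U,K}$ for several pairs $(U,K)$, so you must fix a representative pair for each element of $A_n$ in order for ``respond with $U_n$'' to be a well-defined strategy. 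With those two points made explicit, and the harmless normalization that all $\w$-covers in play consist of proper open sets, the proof is complete.
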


\begin{theorem}[Uspenskii \cite{Uspenskii1982}]\label{Uspenskii}
For any Tychonoff space $X$, $C_p(X)$ is productively countably
tight if, and only if, $X_\delta$ is Lindel\"of.
\end{theorem}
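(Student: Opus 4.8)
The plan is to route the equivalence through Arhangel'skii's characterization recalled above (\cite{ArhangelFrequency}), which trades productive countable tightness for the purely internal notion of $\aleph_0$-singularity. Since $C_p(X)$ is a topological group, every translation is a self-homeomorphism, so $C_p(X)$ is productively countably tight if and only if it is productively countably tight at the single point $\0$; by Arhangel'skii's criterion this happens exactly when $C_p(X)$ is \emph{not} $\aleph_0$-singular at $\0$. Thus the whole statement reduces to the dual identity
$$\text{$C_p(X)$ is $\aleph_0$-singular at $\0$} \iff \text{$X_\delta$ is not Lindel\"of,}$$
and the real content is to build a dictionary between the two sides.

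The dictionary I would set up rests on the observation that a countable subset of $C_p(X)$ encodes a basic open set of the $G_\delta$-modification. Given a countable $B=\{g_k:k\in\w\}\subset C_p(X)$, normalize each $g_k$ to take values in $[0,1]$ and attach the $G_\delta$ set $N(B)=\bigcap_k\{x\in X: g_k(x)<1\}$, a countable intersection of cozero sets, i.e.\ a basic neighbourhood in $X_\delta$. Under this assignment a basic neighbourhood $[F;\varepsilon]$ of $\0$ with $\varepsilon\le 1$ forces $F\subset N(B)$ whenever $B\subset[F;\varepsilon]$, so the non-triviality clause (\ref{non triviality}) translates into the statement that the sets $N(B)$, as $B$ ranges over the members of the families in $\mP$, form an $\w$-cover of $X$ by $G_\delta$ sets; a centered family $\mB\in\mP$ corresponds to a $G_\delta$-filter, i.e.\ to a point of $X_\delta$ together with a base of $G_\delta$-neighbourhoods, and $\mP$ itself corresponds to an open cover of $X_\delta$. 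The Tychonoff hypothesis is used precisely here, to guarantee that cozero sets form a base (so every basic $G_\delta$ neighbourhood arises as some $N(B)$) and, in the reverse passage, that there are enough continuous functions to realise a prescribed $G_\delta$-neighbourhood as a countable set clustering at $\0$.

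With the dictionary in place both implications become bookkeeping. For ``$\aleph_0$-singular $\Rightarrow$ not Lindel\"of'' I would feed a witnessing $\mP$ through the correspondence to obtain a $G_\delta$-cover of $X$; given any countable subcollection $\{\mB_n:n\in\w\}$, clause (2) of $\aleph_0$-singularity produces a selection $B_n\in\mB_n$ with $\0\notin\overline{\bigcup_n B_n}$, and the finite $F$ and $\varepsilon$ separating $\bigcup_n B_n$ from $\0$ yield a point of $X$ avoided by every $N(B_n)$, so no countable subfamily covers $X_\delta$. For the converse I would start from a $G_\delta$-cover with no countable subcover, refine it to an $\w$-cover, pull it back to a family $\mP$ of centered families realising (\ref{non triviality}), and read off clause (2) from the failure of countable subcovers: a point escaping a chosen countable subfamily gives a neighbourhood $[F;\varepsilon]$ of $\0$ missing the corresponding union. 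The step I expect to be genuinely delicate---the main obstacle---is matching the \emph{quantifiers} across the dictionary: neighbourhoods of $\0$ speak about \emph{finite} sets $F\subset X$ whereas the Lindel\"of statement speaks about single points and $G_\delta$ sets, so the $\w$-cover refinement and the centeredness of the families must be handled with care to turn the ``escaping selection'' of $\aleph_0$-singularity into a genuine failure of the countable subcover property, and conversely. An alternative, more computational route would bypass $\aleph_0$-singularity and test productive tightness directly against the sequential fan $S_\c$ as in Theorem \ref{prod implies selection}, translating the resulting selection principle on $C_p(X)$ into a covering property of $X_\delta$; but I expect the Arhangel'skii criterion to give the cleaner argument.
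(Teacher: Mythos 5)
The paper does not prove this statement at all: it is quoted as a known theorem of Uspenskii \cite{Uspenskii1982}, so there is no in-paper argument to compare yours against. Judged on its own, your proposal is an outline rather than a proof, and the step you yourself flag as ``genuinely delicate'' is exactly where it breaks. Consider the implication ``$C_p(X)$ $\aleph_0$-singular at $\0$ $\Rightarrow$ $X_\delta$ not Lindel\"of''. First, clause (2) of $\aleph_0$-singularity only provides \emph{one} selection $B_n\in\mB_n$ with $\0\notin\overline{\bigcup_n B_n}$ for each countable choice of members of $\mP$, whereas non-Lindel\"ofness of $X_\delta$ requires that \emph{every} countable subfamily of your cover $\{N(B): B\in\mB\in\mP\}$ fail to cover $X$; such a subfamily may take many (or all) of its sets from a single $\mB$ and need not contain the selected $B_n$'s at all, so the quantifiers genuinely do not match. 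Second, even for the selected sets, the separating neighbourhood $[F;\varepsilon]$ only gives, for each $g\in B_n$, some point $x_g\in F$ with $|g(x_g)|\ge\varepsilon$; to conclude that a single point of $F$ lies outside every $N(B_n)=\bigcap_{g\in B_n}\{x:|g(x)|<1\}$ you would need one $x\in F$ and, for each $n$, some $g\in B_n$ with $|g(x)|\ge 1$ --- the witness varies with $g$, the threshold $1$ does not match $\varepsilon$, and no pigeonhole argument is supplied. The same $\varepsilon$-versus-$1$ mismatch reappears in the converse direction when you try to realise a prescribed basic $G_\delta$ set as $N(B)$ with $B$ contained in a prescribed $[F;\varepsilon]$.

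Two further points. Your assertion that a centered family $\mB\in\mP$ ``corresponds to a point of $X_\delta$ together with a base of $G_\delta$-neighbourhoods'' is unjustified: centeredness of a family of countable subsets of $C_p(X)$ gives no common point for the associated sets $N(B)$, $B\in\mB$, and nothing resembling a neighbourhood base. And the alternative route you mention (testing against $S_\c$ as in Theorem \ref{prod implies selection}) cannot recover the theorem: it only yields $\sone(\Omega_{\0},\Omega_{\0})$ for $C_p(X)$, equivalently $\sone(\Omega,\Omega)$ for $X$, which is strictly weaker than Lindel\"ofness of $X_\delta$. The reduction to the single point $\0$ via homogeneity and the appeal to Arhangel'skii's criterion are sound, but the dictionary that is supposed to carry the remainder of the argument is precisely what is missing.
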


\begin{corollary}
  Let $X$ be a Tychonoff space. If $C_p(X)$ is productively
countably tight, then $X$ is productively Lindel\"of.
\end{corollary}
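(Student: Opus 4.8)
The plan is to reduce the statement, via Uspenskii's characterization, to a purely topological implication and then prove the latter by a slicewise tube lemma. First I would apply Theorem \ref{Uspenskii}: the hypothesis that $C_p(X)$ is productively countably tight is equivalent to $X_\delta$ being Lindel\"of. So it suffices to show that if $X_\delta$ is Lindel\"of, then $X\times Y$ is Lindel\"of for every Lindel\"of $Y$, which is exactly the assertion that $X$ is productively Lindel\"of.

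To this end I would fix a Lindel\"of space $Y$ and an open cover $\mathcal{W}$ of $X\times Y$, which may be assumed to consist of basic rectangles $U\times V$. The key step is a slicewise tube lemma. For each $x\in X$, the slice $\{x\}\times Y$ is homeomorphic to $Y$ and hence Lindel\"of, so there are countably many members $U_n^x\times V_n^x$ ($n\in\w$) of $\mathcal{W}$ with $Y=\bigcup_{n\in\w}V_n^x$. Setting $G_x=\bigcap_{n\in\w}U_n^x$, one gets a $G_\delta$ set with $x\in G_x$ and $G_x\times Y\subset\bigcup_{n\in\w}(U_n^x\times V_n^x)$; the inclusion is immediate, since $z\in G_x$ forces $z\in U_n^x$ for every $n$, while any $y\in Y$ lies in some $V_n^x$.

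Then I would observe that $\{G_x:x\in X\}$ is a cover of $X$ by $G_\delta$ sets, hence an open cover of $X_\delta$. Lindel\"ofness of $X_\delta$ yields a countable $\{x_k:k\in\w\}$ with $X=\bigcup_{k\in\w}G_{x_k}$, and then $\{U_n^{x_k}\times V_n^{x_k}:k,n\in\w\}$ is a countable subfamily of $\mathcal{W}$ covering $X\times Y$. As $Y$ was an arbitrary Lindel\"of space, $X$ is productively Lindel\"of.

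The conceptual heart --- and the only point requiring care --- is recognizing that the tube lemma manufactures precisely a $G_\delta$-cover of $X$, which is exactly the kind of cover that Lindel\"ofness of $X_\delta$ controls; the reduction to basic rectangles and the uniform countability of each slice cover are routine. I expect no genuine obstacle beyond this observation, and in particular no appeal to Alster-type theorems or to the game-theoretic machinery of the earlier sections is needed.
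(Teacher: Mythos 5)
Your argument is correct. The first step is identical to the paper's: Theorem \ref{Uspenskii} converts the hypothesis into the statement that $X_\delta$ is Lindel\"of. The difference is in the second step. The paper disposes of it in one line by citing \cite{Aurichi2013} for the implication ``$X_\delta$ Lindel\"of implies $X$ productively Lindel\"of'', whereas you prove that implication from scratch by the slicewise tube lemma: cover each slice $\{x\}\times Y$ by countably many rectangles $U_n^x\times V_n^x$, intersect the $X$-sides to obtain a $G_\delta$ tube $G_x\ni x$ with $G_x\times Y\subset\bigcup_{n\in\w}\bigl(U_n^x\times V_n^x\bigr)$, and then use Lindel\"ofness of $X_\delta$ to extract countably many tubes covering $X$. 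Every step checks out (the reduction to basic rectangles, the tube inclusion, and the assembly of the countable subcover are all sound; the only unstated detail is that each chosen rectangle meets the slice, so that $x\in U_n^x$ and hence $x\in G_x$, which is immediate). This is in fact the standard proof of the result the paper cites, so your route is not conceptually new, but it makes the corollary self-contained and makes visible exactly where Lindel\"ofness of the $G_\delta$ modification, rather than of $X$ itself, is what the argument needs; the paper's version is shorter but rests entirely on the external reference at this point.
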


\begin{proof}
  Just note that in \cite{Aurichi2013}, it is proved that, if
$X_\delta$ is Lindel\"of, then $X$ is productively Lindel\"of.
\end{proof}

\begin{proposition} \label{prop}
  Let $X$ be a Tychonoff space. If player II has a winning
strategy in $\gone(\mO, \mO)$, then player II has a winning
strategy in $\gone(\Omega_{\0},\Omega_{\0})$ played on $C_p(X)$
and $C_p(X)$ is productively countably tight.
\end{proposition}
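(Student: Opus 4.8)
The plan is to push the hypothesis from the open-cover game down to the $\omega$-cover game on $X$, transfer it to $C_p(X)$ through Theorem~\ref{Scheepers}, and finally read off productive countable tightness from Corollary~\ref{game implies prod}. First I would argue that a winning strategy for player II in $\gone(\mO,\mO)$ on $X$ yields one in $\gone(\Omega,\Omega)$ on $X$. The device is the usual passage between open covers and $\omega$-covers: if $\mathcal{V}$ is an $\omega$-cover of $X$ then $\{V^k:V\in\mathcal{V}\}$ is an open cover of $X^k$, and a selection from the latter covering $X^k$ corresponds exactly to a selection from $\mathcal{V}$ absorbing every subset of $X$ of size at most $k$. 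Accordingly I would split $\w$ into infinitely many infinite blocks $\{N_k:k\in\w\}$; on the innings indexed by $N_k$, player II would answer the $\omega$-covers offered by player I by handing their $k$-th powers to the open-cover strategy. The moves made along $N_k$ then cover every subset of $X$ of size at most $k$, and, letting $k$ range over $\w$, the full selection absorbs every finite subset, i.e.\ it is an $\omega$-cover.

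Granting player II a winning strategy in $\gone(\Omega,\Omega)$ on $X$, Theorem~\ref{Scheepers} immediately produces a winning strategy for player II in $\gone(\Omega_{\0},\Omega_{\0})$ on $C_p(X)$, which is the first assertion. For the second, I would apply Corollary~\ref{game implies prod} to $C_p(X)$ at the point $\0$ to conclude that $C_p(X)$ is productively countably tight at $\0$. Since $C_p(X)$ is a topological group, hence homogeneous, translation carries this to every point; and productive countable tightness at every point is equivalent to the global property, so $C_p(X)$ is productively countably tight.

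The crux is the first step. Feeding $k$-th powers to the open-cover strategy tacitly requires player II to have a winning strategy in $\gone(\mO,\mO)$ on each finite power $X^k$, whereas the hypothesis only supplies one on $X$. I expect the genuine work to be in showing that a winning strategy for player II in the open-cover game is inherited by all finite powers---equivalently, in invoking the known equivalence (due to Scheepers) between winning strategies for player II in $\gone(\mO,\mO)$ and in $\gone(\Omega,\Omega)$, which packages precisely this finite-power phenomenon. Should that equivalence be quoted directly, the first paragraph collapses to a single line; otherwise the inheritance of player II's strategy by the powers $X^k$ must be built by hand, and that is where the delicate bookkeeping sits.
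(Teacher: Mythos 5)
Your argument is correct, and for the first assertion it ends up in the same place as the paper: the paper simply quotes Gerlits--Nagy (Theorem 1 of their paper) for the implication from a winning strategy for II in $\gone(\mO,\mO)$ to one in $\gone(\Omega,\Omega)$, and then applies Theorem \ref{Scheepers} --- exactly the ``collapse to a single line'' you anticipate. Your candid observation that the by-hand version via $k$-th powers needs the strategy on every finite power $X^k$, not just on $X$, is the right diagnosis of where the quoted equivalence does its work. Where you genuinely diverge is the second assertion. You derive productive countable tightness of $C_p(X)$ from the first assertion, via Corollary \ref{game implies prod} applied at $\0$ together with the homogeneity of the topological group $C_p(X)$; this is valid ($C_p(X)$ is Tychonoff, and local productive countable tightness at every point gives the global property), and it is arguably more economical since it reuses the machinery of Section 2. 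The paper instead proves the second assertion independently of the first: it invokes the Aurichi--Dias result that a winning strategy for II in $\gone(\mO,\mO)$ forces $X_\delta$ to be Lindel\"of, and then applies Uspenskii's Theorem \ref{Uspenskii}. The paper's route buys the extra intermediate fact that $X_\delta$ is Lindel\"of (which ties into the surrounding results of Section 3), while yours shows that the two conclusions of the proposition are not independent --- the second follows from the first. Both are complete proofs.
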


\begin{proof}
   In \cite{Aurichi2013} it is shown that, if player II has a
winning strategy  in
$\gone(\mO, \mO)$ played on $X$, then $X_\delta$ is Lindel\"of.
Therefore, by Theorem \ref{Uspenskii}, $C_p(X)$ is productively
countably tight.

  Also,  if player II has a winning strategy  in $\gone(\mO,
\mO)$,
then player II has a winning strategy  in $\gone(\Omega, \Omega)$
(\cite{Gerlits1982}, Theorem 1). Therefore, by Theorem
\ref{Scheepers}, player II has a
winning strategy in $\gone(\Omega_{\0},\Omega_{\0})$ played on
$C_p(X)$.
\end{proof}

As mentioned in the proof of \ref{prop},
the first author and Dias proved in \cite{Aurichi2013} that, if
player II has a winning strategy  in the game $\gone(\mO, \mO)$,
then $X_\delta$ is Lindel\"of. Here we can do a little better, at
least for Tychonoff spaces:

\begin{proposition}
  Let $X$ be a Tychonoff space. If player II has a winning
strategy  in the game $\gone(\Omega, \Omega)$, then $X_\delta$ is
Lindel\"of.
\end{proposition}

\begin{proof}
  It follows directly from Theorem \ref{Scheepers}, Corollary
\ref{game implies prod} and Theorem \ref{Uspenskii}.
\end{proof}

\section{Some examples}

We begin this section by showing that the implication in
Corollary
\ref{game implies prod} cannot be reversed, even for spaces of
the form $C_p(X)$:

\begin{proposition}\label{Telagarsky}
  There is a space of the form $C_p(X)$ that is productively
countably tight but such that player II does not have a winning
strategy in the game $\gone(\Omega_{\0}, \Omega_{\0})$.
\end{proposition}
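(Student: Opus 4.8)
The plan is to exhibit a concrete Tychonoff space $X$ for which $C_p(X)$ is productively countably tight while player II fails to win $\gone(\Omega_{\0},\Omega_{\0})$ on $C_p(X)$. The two translation theorems available to us convert both conditions into statements purely about $X$: by Theorem \ref{Uspenskii}, the productive countable tightness of $C_p(X)$ is equivalent to $X_\delta$ being Lindel\"of; and by Theorem \ref{Scheepers}, player II has a winning strategy in $\gone(\Omega_{\0},\Omega_{\0})$ on $C_p(X)$ if and only if player II has a winning strategy in $\gone(\Omega,\Omega)$ played on $X$. So the entire task reduces to finding a Tychonoff $X$ such that
\begin{itemize}
\item $X_\delta$ is Lindel\"of, yet
\item player II does \emph{not} have a winning strategy in $\gone(\Omega,\Omega)$ on $X$.
\end{itemize}

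First I would search the literature on the game $\gone(\Omega,\Omega)$ (equivalently, the strategic form of the Gerlits--Nagy $\gamma$-property) for a space where player II is known to lack a winning strategy despite good covering behavior. The natural candidates are spaces satisfying $\sone(\Omega,\Omega)$, or even ones where player I has no winning strategy, but where a full winning strategy for II is obstructed --- exactly the kind of gap Scheepers highlights in the remark following Corollary \ref{prod implies s}. The crux is to pair such a space with the requirement that its $G_\delta$ modification be Lindel\"of. A clean way to force $X_\delta$ Lindel\"of is to take $X$ countable (then $X_\delta$ is a countable, hence trivially Lindel\"of, space), or more generally a space of cardinality at most $\aleph_1$ with strong covering properties; for a countable space the first bullet is automatic and all the work concentrates on the second bullet.

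The key steps, in order, would be: (i) invoke Theorem \ref{Uspenskii} and Theorem \ref{Scheepers} to restate the desired conclusion entirely in terms of $X$, as above; (ii) produce the witnessing space $X$ --- most likely a carefully chosen countable Tychonoff space or a known consistent/ZFC example from the Menger--Rothberger--$\gamma$ hierarchy --- for which $X_\delta$ is Lindel\"of; and (iii) prove that player II has no winning strategy in $\gone(\Omega,\Omega)$ on $X$, typically by a diagonalization against an arbitrary strategy $F$: simulate plays of player I so that the sequence of open $\w$-covers presented forces the point-selections made by $F$ to fail to form an $\w$-cover, for instance by diagonally defeating the strategy's responses along a tree of plays. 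Finally, translating back via the two theorems yields the stated separation for $C_p(X)$.

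The hard part will be step (iii): constructing, for every candidate strategy $F$ of player II, a run of player I moves that beats it. One must exploit the combinatorial freedom player I retains --- choosing the $n$-th open $\w$-cover adaptively in response to $F$'s previous selections --- to steer the selected points into a set that omits cofinally many finite subsets of $X$, so that it is not an $\w$-cover. Balancing this defeat of \emph{strategies} (not merely of the single-selection game $\sone(\Omega,\Omega)$, which $X$ may well satisfy) against the simultaneous demand that $X_\delta$ be Lindel\"of is the delicate point, and it is precisely the gap between "player I has no winning strategy" and "player II has a winning strategy" that must be exploited to make both conditions hold at once.
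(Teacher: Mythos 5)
Your step (i) — reducing both conditions to statements about $X$ via Theorems \ref{Uspenskii} and \ref{Scheepers}, so that one only needs a Tychonoff $X$ with $X_\delta$ Lindel\"of on which player II has no winning strategy in $\gone(\Omega,\Omega)$ — is exactly the reduction the paper uses. But the proposal stops there: the witnessing space is never produced, and your main concrete suggestion for securing the first bullet, namely taking $X$ countable, is provably a dead end. On any countable space player II has a trivial winning strategy in $\gone(\Omega,\Omega)$: enumerate the finite subsets of $X$ as $\{F_n : n\in\w\}$ and at inning $n$ choose an element of the $n$-th $\w$-cover containing $F_n$; the selections form an $\w$-cover. (Likewise player II wins $\gone(\mO,\mO)$ on every countable space.) So the second bullet can never hold for countable $X$, and the ``delicate balance'' you describe in step (iii) cannot be struck in that setting.

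The paper resolves the tension differently: it takes $X$ to be a Lindel\"of P-space of Telgarsky (cited via Example 3.4 of \cite{Aurichi2013}) in which player II has no winning strategy even in $\gfin(\mO,\mO)$, hence none in $\gone(\Omega,\Omega)$. Being a P-space makes $X_\delta=X$, so Lindel\"ofness of $X_\delta$ is automatic, and the failure of a strategy for II is imported from the literature rather than established by a fresh diagonalization. Without either citing such an example or actually carrying out the anti-strategy construction of your step (iii) on a specific uncountable space, the argument is incomplete: the entire mathematical content of the proposition lives in exhibiting that space, and the proposal leaves it as an open search.
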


\begin{proof}
  An example $X$ due to Telgarsky, mentioned by the first author
and Dias in
 \cite{Aurichi2013}, Example 3.4, provides a Lindel\"of P-space
in which player II does not have a winning strategy in
$\gfin(\mO, \mO)$. Therefore, player II does not have a winning
strategy even  in $\gone(\Omega, \Omega)$.  Note that, since
$X_\delta=X $ is Lindel\"of, $C_p(X)$
is productively countably tight (by Theorem \ref{Uspenskii}).
But, since player II does not have a winning strategy  in the
game $\gone(\Omega, \Omega)$ played on $X$, then player II does
not have a winning strategy  in the game $\gone(\Omega_{\0},
\Omega_{\0})$ played on $C_p(X)$ (by Theorem \ref{Scheepers}).
\end{proof}

In the proof of Theorem \ref{prod implies selection},
we only used the fact that $X \times S_\c$ has countable
tightness to show that $X$ satisfies $\sone(\Omega_x, \Omega_x)$.
A natural question  arises on whether the other direction also
works. We will show that this is not the case, at least
consistently.

First, recall that a point $p\in \omega^*$ is a {\bf selective
ultrafilter}
if for any partition $\{A_n:n<\omega\}$ of $\omega$ such that
$A_n\notin p$ for each $n$, there exists $P\in p$ such that
$|P\cap A_n|=1$ for each $n$. The existence of such $p$ is
independent from ZFC (see \cite{Kunen1976}).

\begin{proposition}\label{selective}
  Let $p\in \omega^*$ be a selective ultrafilter  and consider
the space $\{p\}\cup \omega$. Then, $\sone(\Omega_p,\Omega_p)$
holds, but $(\{p\}\cup
\omega)\times S_\frak c$ is not countably tight at $\pr{p,0}$.
\end{proposition}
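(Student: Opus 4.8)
The plan is to treat the space $Y=\{p\}\cup\w$ as a subspace of $\beta\w$, with the points of $\w$ isolated and the neighborhoods of $p$ being the sets $\{p\}\cup B$ for $B\in p$. The first observation is that, since $p$ is an ultrafilter, a set $A\subseteq\w$ satisfies $p\in\overline A$ exactly when $A\in p$; hence $\Omega_p$ is precisely the collection of members of $p$. With this identification, $\sone(\Omega_p,\Omega_p)$ becomes the statement that for every sequence $(A_n)_{n\in\w}$ with $A_n\in p$ one can pick $a_n\in A_n$ so that $\{a_n:n\in\w\}\in p$.

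To verify this I would use that a selective ultrafilter is both a P-point and a Q-point. Replacing $A_n$ by $A_0\cap\cdots\cap A_n$, I may assume the sequence is decreasing. Using the P-point property, fix $B=\{b_0<b_1<\cdots\}\in p$ with $B\setminus A_n$ finite for every $n$; since the $A_n$ decrease, there is a nondecreasing function $h$ with $b_k\in A_n$ for all $k\ge h(n)$. Partition $\w$ into finite blocks refining $J_m=\{b_k:h(m)\le k<h(m+1)\}$ and, using the Q-point property, choose a selector $Q\in p$ meeting each block in at most one point. Put $A=Q\cap B\in p$ and enumerate $A=\{a_0<a_1<\cdots\}$ increasingly. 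Because $A$ meets each consecutive block at most once, its $n$-th element lies in a block $J_m$ with $m\ge n$, whence $a_n=b_k$ with $k\ge h(m)\ge h(n)$ and therefore $a_n\in A_n$. As $A=\{a_n:n\in\w\}\in p=\Omega_p$, this establishes $\sone(\Omega_p,\Omega_p)$.

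For the failure of countable tightness I would first produce a MAD family $\{R_\alpha:\alpha<\c\}$ of infinite subsets of $\w$ with $R_\alpha\notin p$ for every $\alpha$; I expect this to be the main point. Identifying $\w$ with $\w\times\w$, at most one column and at most one row can belong to $p$, and a column and a row cannot both lie in $p$ since they meet in a single point; hence either the columns or the rows form a partition of $\w$ into infinitely many infinite pieces all lying outside $p$. Starting from such a partition, take (by Zorn) an almost disjoint family $\mathcal A$ outside $p$ that is maximal with respect to these properties. If $\mathcal A$ were not MAD, some infinite $B$ would be almost disjoint from every member of $\mathcal A$; splitting $B$ into two infinite halves shows some infinite subset of $B$ lies outside $p$, and this subset is still almost disjoint from $\mathcal A$, contradicting maximality. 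Thus $\mathcal A$ is MAD and avoids $p$; reindex it injectively by ordinals $<\c$ as spines of $S_\c$.

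Finally, set $W=\bigcup_{\alpha<\c}\{\pr{n,z_n^\alpha}:n\in R_\alpha\}$ in $Y\times S_\c$. To see $\pr{p,0}\in\overline W$, take a basic neighborhood $(\{p\}\cup B)\times V(f)$ with $B\in p$; by MADness some $R_\alpha$ meets $B$ infinitely, so one may pick $n\in R_\alpha\cap B$ with $n\ge f(\alpha)$, placing $\pr{n,z_n^\alpha}\in W$ inside the neighborhood. Now let $W'\subseteq W$ be countable, using only spines from a countable set $F$, and for $\alpha\in F$ put $T_\alpha=\{n:\pr{n,z_n^\alpha}\in W'\}\subseteq R_\alpha$. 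Since $R_\alpha\notin p$ we have $T_\alpha\notin p$, so $\w\setminus T_\alpha\in p$; as $p$ is a P-point, choose $B\in p$ with $B\cap T_\alpha$ finite for all $\alpha\in F$, and define $f$ by $f(\alpha)=1+\max(B\cap T_\alpha)$ for $\alpha\in F$ and $f(\alpha)=0$ otherwise. Then $(\{p\}\cup B)\times V(f)$ misses $W'$: any $\pr{n,z_n^\alpha}\in W'$ forces $\alpha\in F$ and $n\in T_\alpha$, but $n\in B$ would give $n\le\max(B\cap T_\alpha)<f(\alpha)$, so $z_n^\alpha\notin V(f)$. Hence $\pr{p,0}\notin\overline{W'}$, and since $\pr{p,0}\in\overline W$ while no countable subset of $W$ accumulates at it, $Y\times S_\c$ is not countably tight at $\pr{p,0}$. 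The only step requiring genuine care is the construction of the MAD family lying entirely outside $p$; everything else is a direct verification once the neighborhood bases of $p$ and of $0\in S_\c$ are written out.
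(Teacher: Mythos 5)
Your proof is correct, but both halves take a genuinely different route from the paper. For $\sone(\Omega_p,\Omega_p)$, the paper applies selectivity in one shot to the partition $\{\omega\setminus B_0\}\cup\{B_n\setminus B_{n+1}:n<\omega\}\cup\{\bigcap_n B_n\}$ (where $B_n=A_0\cap\cdots\cap A_n$), extracting a selector $Q=\{q_n:n<\omega\}$ with $q_n\in B_n\setminus B_{n+1}\subset A_n$ directly; you instead factor selectivity into the P-point and Q-point properties, first taking a pseudo-intersection and then a selector through finite blocks. Your version is longer and needs a little bookkeeping (extending the blocks $J_m$ to a genuine partition of $\omega$ into finite sets, discarding the finitely many $b_k$ with $k<h(0)$), but it is sound. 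For the failure of countable tightness, the paper does not build a MAD family at all: it enumerates $p=\{P_\alpha:\alpha<\c\}$ and uses the complements $S_\alpha=\omega\setminus P_\alpha$ as the spines; density of the witnessing set follows by splitting any $P\in p$ into two infinite halves, one of which lies in the complement of a member of $p$, and the killing of countable subsets uses exactly the P-point argument you give. Your MAD-family-avoiding-$p$ construction (columns versus rows of $\omega\times\omega$, then Zorn plus the splitting trick to verify maximality) is correct and arguably more transparent in the density step, at the cost of an extra construction; note only that such a MAD family may consistently have cardinality less than $\c$, so your union should run over its actual index set rather than literally over all $\alpha<\c$ --- this is harmless. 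Both proofs of the second half in fact use only that $p$ is a P-point ultrafilter.
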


\begin{proof}
We begin by showing that $\sone(\Omega_p, \Omega_p)$ holds in
$\{p\}\cup \omega$. Let  $A_n\subset
\omega$ with $p\in \overline {A_n}$ for every $n<\omega$. Since
$A_n\in p$ for each $n$, even $B_n=A_0 \cap \cdots \cap A_n\in
p$. If $\bigcap \{B_n:n<\omega\}\in p$, then we are done. In the
other case,  since the family $\{\omega\setminus
B_0\}\cup\{B_n\setminus B_{n+1}:n<\omega\}\cup
\{\bigcap\{B_n:n<\omega\}\}$ is a partition of $\omega$ whose
elements are  not in $p$. Since $p$ is selective, there is a $Q
\in p$
such that $Q=\{q_n:n<\omega\}$ and $q_n\in B_n\setminus B_{n+1}
\subset  A_n$ for
each $n$. Therefore, $S^\omega_1(\Omega_p,\Omega_p)$ holds.

Now, let $p=\{P_\alpha :\alpha <\frak c\}$ and for each $\alpha $
put
$S_\alpha =\omega\setminus P_\alpha =\{x_n^\alpha :n<\omega\}$.
Let $A=\{\pr{x_n^\alpha ,z_n^\alpha}: n < \omega, \alpha <\frak
c\}$. We claim that $\langle p,0\rangle \in \overline A$.
 To see this, let $P\in p$ and $f\in {}^\frak c\omega$.
 Split $P$
into two infinite sets, say $P'$ and $P''$. Without loss of
generality, assume that $P'\in p$. Then $\omega\setminus
P'=S_\beta$ for some $\beta<\frak c$. It  follows that
$(\{p\}\cup P)\times V(f) \cap A$ is infinite and the claim is
proved. Let $B$ be a countable subset of $A$.  Then there exists
a countable  $F\subset  \frak c$ such that $B\subset  \{\langle
x_n^\alpha ,z_n^\alpha \rangle: n<\omega, \alpha \in F\}$. Since
$p$ is a  selective ultrafilter,   there exists $Q\in p$ such
that  $|Q\cap
S_\alpha |<\aleph_0$ for each $\alpha \in F$.  Then we may find a
function $g\in {}^\frak c\omega$ such that $(\{p\}\cup Q)\times
V_g\cap B=\emptyset$.  This shows that $(\{p\}\cup \omega)\times
S_\frak c$ does not have countable tightness at $\langle
p,0\rangle$.
\end{proof}

Thus, in particular, the property of $X$ being productively
countably tight at $x \in X$ is strictly in between
$\sone(\Omega_x,
\Omega_x)$ and player II having a winning strategy  in the game
$\gone(\Omega_x, \Omega_x)$.

In \cite{Scheepers1997}, Theorem 13B ($1 \Rightarrow 2$),
Scheepers proved the following:

\begin{proposition}\label{tight cov}
  Let $X$ be a space of countable tightness. For every $x \in X$,
if $\chi(x, X) < cov(\mM)$, then player I does not have a winning
strategy in the game $\gone(\Omega_x, \Omega_x)$.
\end{proposition}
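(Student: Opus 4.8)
The plan is to fix an arbitrary strategy $\sigma$ for player I and exhibit a single run of $\gone(\Omega_x,\Omega_x)$ in which player II defeats $\sigma$; since $\sigma$ is arbitrary, this shows player I has no winning strategy. Fix a neighborhood base $\{U_\alpha:\alpha<\kappa\}$ at $x$ with $\kappa=\chi(x,X)<cov(\mM)$. The governing idea is to organize \emph{all} of player II's legal responses against $\sigma$ into a countably branching tree, so that the runs of the game are coded by the Baire space $\w^\w$, and then to win by a Baire category argument there.

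First I would build the tree by recursion on $s\in\w^{<\w}$. To each node $s$ I attach the set $P_s\in\Omega_x$ that player I plays according to $\sigma$ after the partial run coded by $s$. Since $x\in\overline{P_s}$ and $X$ has countable tightness at $x$, I may choose a \emph{countable} $A_s\subseteq P_s$ with $x\in\overline{A_s}$ and enumerate it as $A_s=\{c_s^k:k\in\w\}$. The child $s\frown k$ records that player II answers $c_s^k$, which is legal because $A_s\subseteq P_s$, and then $P_{s\frown k}$ is $\sigma$ applied to the actual sequence of II's moves ending in $c_s^k$. For $f\in\w^\w$ this yields a genuine $\sigma$-run in which II's $n$-th move is $b_n(f)=c_{f\restriction n}^{f(n)}$. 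The countable-tightness reduction is exactly what makes the branching countable, so that $\w^\w$ is the correct coding space.

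Next, for each $\alpha<\kappa$ set
$$ D_\alpha=\{f\in\w^\w: b_n(f)\in U_\alpha\text{ for some }n\in\w\}. $$
Each $D_\alpha$ is open, since whether $b_n(f)\in U_\alpha$ depends only on $f\restriction(n+1)$. Each $D_\alpha$ is also dense: given $s$ of length $m$, for any extension $f$ of $s$ the move $b_m(f)$ equals $c_s^{f(m)}$ and ranges over all of $A_s$ as $f(m)$ varies, and since $x\in\overline{A_s}$ the neighborhood $U_\alpha$ meets $A_s$, so some value of $f(m)$ puts $b_m(f)\in U_\alpha$. Thus $\{D_\alpha:\alpha<\kappa\}$ is a family of fewer than $cov(\mM)$ dense open subsets of $\w^\w$, whose intersection is therefore nonempty by the standard characterization of $cov(\mM)$ as the least number of dense open sets with empty intersection.

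Finally I would pick $f\in\bigcap_{\alpha<\kappa}D_\alpha$. For each $\alpha$ some $b_n(f)\in U_\alpha$, and as the $U_\alpha$ form a base at $x$ this gives $x\in\overline{\{b_n(f):n\in\w\}}$; moreover each $b_n(f)\neq x$ because $b_n(f)\in P_s$ and $P_s\in\Omega_x$, so $\{b_n(f):n\in\w\}\in\Omega_x$ and player II wins this run against $\sigma$. I expect the main obstacle to be the setup rather than any single estimate: one must fold player I's possibly uncountable responses into a countably branching tree, which is precisely where countable tightness is indispensable, and then recognize the needed combinatorial fact as the definition of $cov(\mM)$. The crux is the density of each $D_\alpha$, and this rests entirely on the defining property $x\in\overline{A_s}$ of members of $\Omega_x$.
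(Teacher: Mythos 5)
Your argument is correct. Note, however, that the paper offers no proof of this proposition to compare against: it is quoted verbatim from Scheepers (\cite{Scheepers1997}, Theorem 13B), and your Baire-category argument is in substance the standard proof of that result. The construction is sound at every step: countable tightness at $x$ is exactly what collapses player I's strategy $\sigma$ into a countably branching tree indexed by $\w^{<\w}$, so that the $\sigma$-runs are coded by $\w^\w$; each $D_\alpha$ is open because membership is witnessed at a finite stage, and dense because $x\in\overline{A_s}$ forces $U_\alpha\cap A_s\neq\emptyset$ at every node $s$; and $\chi(x,X)<cov(\mM)$ is precisely what guarantees a branch $f$ meeting all $D_\alpha$, along which player II's moves accumulate at $x$ while avoiding $x$ itself (since each $P_s\in\Omega_x$ satisfies $x\notin P_s$). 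No gaps.
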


We can show that the hypotheses of Proposition \ref{tight cov}
are not strong enough to guarantee that player II has a winning
strategy. Of course,  this is not the case of
$cov(\mM)=\aleph_1$, because  player II has an obvious winning
strategy  at any point of first countability.

\begin{proposition}
   [$cov(\mM)>\aleph_1$]\  There is a space $X$ of countable
tightness such
that $\chi(X) < cov(\mM)$ and player II does not have a winning
strategy in the game $\gone(\Omega_x, \Omega_x)$ for every $x \in
X$.
\end{proposition}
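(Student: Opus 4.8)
The plan is to realize the required space as a function space $C_p(Y)$ for a suitably chosen $Y$, and then to derive the failure of a winning strategy for player II purely from Proposition \ref{new} together with the standard cardinal-function dictionary of $C_p$-theory. Concretely, I would take $Y$ to be any subset of $\RR$ of cardinality $\aleph_1$ and set $X = C_p(Y)$. The point of working with $C_p(Y)$ is threefold: it is a topological group, hence homogeneous, so any local property holding at one point holds at every point; it is Tychonoff, hence $T_3$; and its basic cardinal functions are completely controlled by the very simple space $Y$.

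Then I would verify the three structural properties one by one. Separability of $X$ follows since $d(C_p(Y)) = iw(Y) = \aleph_0$, because $Y$, being second countable, has countable i-weight. Countable tightness of $X$ follows from the Arhangel'skii--Pytkeev theorem, by which $C_p(Y)$ is countably tight precisely when every finite power $Y^n$ is Lindel\"of; and indeed each $Y^n \subseteq \RR^n$ is second countable, hence Lindel\"of. Finally, $\chi(C_p(Y)) = |Y| = \aleph_1$, so on the one hand $X$ is not first countable (its character being uncountable), and on the other hand $\chi(X) = \aleph_1 < cov(\mM)$, where the strict inequality is exactly the place in which the hypothesis $cov(\mM) > \aleph_1$ is used.

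To finish, fix any $f \in X = C_p(Y)$. By homogeneity the character of $X$ at $f$ equals $\chi(X) = \aleph_1$, so $X$ is not first countable at $f$. Since $X$ is separable and $T_3$, the contrapositive of Proposition \ref{new} shows that player II has no winning strategy in $\gone(\Omega_f, \Omega_f)$; as $f$ was arbitrary, this holds at every point of $X$. I expect the only delicate point to be the \emph{simultaneous} failure of first countability at every point, which is precisely why a homogeneous example such as $C_p(Y)$ is chosen rather than, say, a sequential fan (where player II would trivially win at isolated points); once homogeneity is in place, the remainder is a routine assembly of the quoted $C_p$ identities.
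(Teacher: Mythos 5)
Your proof is correct, but it runs along a genuinely different track than the paper's. The paper also takes $X=C_p(Y)$ for $Y\subset\RR$ of size $\aleph_1$, but it gets the game-theoretic conclusion from the machinery of Section 2: since every point of $\RR$ is a $G_\delta$, the space $Y_\delta$ is discrete of cardinality $\aleph_1$ and hence not Lindel\"of, so by Uspenskii's Theorem \ref{Uspenskii} the space $C_p(Y)$ is not productively countably tight; the contrapositive of Corollary \ref{game implies prod} (the paper's citation of Theorem \ref{prod implies selection} at this step appears to be a slip) then forbids a winning strategy for player II, with homogeneity spreading this to every point. You instead bypass productive countable tightness entirely and argue from Proposition \ref{new}: $C_p(Y)$ is separable (as $d(C_p(Y))=iw(Y)=\aleph_0$ for second countable $Y$), Tychonoff, and has character $|Y|=\aleph_1$ at every point by homogeneity, so it is nowhere first countable and Scheepers' result kills player II's strategy at each point. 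All the $C_p$-identities you invoke are standard and correctly applied, so the argument is complete. What each approach buys: yours is more self-contained relative to Section 2 and makes the role of non-first-countability explicit; the paper's has the advantage of also establishing that $X$ fails to be productively countably tight, which is exactly what is recycled immediately afterwards in Proposition \ref{contro} (where, combined with Proposition \ref{tight cov}, one additionally needs player I to lack a winning strategy). If you wanted your route to subsume that follow-up, you would still need the Uspenskii step; also note that your argument, unlike the paper's, does not use the hypothesis $cov(\mM)>\aleph_1$ for the game conclusion at all --- that hypothesis only enters to make the inequality $\chi(X)<cov(\mM)$ in the statement nontrivial.
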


\begin{proof}
Assume $cov(\mM) > \aleph_1$  and take a subset $Y \subset \RR$
of
cardinality $\aleph_1$. Then let $X = C_p(Y)$. Note that $X$
has countable tightness since every finite power of $Y$ is
Lindel\"of and $\chi(X) = \aleph_1 < cov(\mM)$. Note that
$Y_\delta$ is not Lindel\"of
and so by Theorem \ref{Uspenskii}, $X$ is not productively
countably tight. Finally, thanks to Theorem \ref{prod implies
selection}, we see that player II does not have a winning
strategy in $\gone(\Omega_x, \Omega_x)$ for every $x \in X$.
\end{proof}

The same proof of the above  result yields also the following:
\begin{proposition}[$cov(\mM)>\aleph_1$] \label{contro}  \  There
is a Tychonoff space $X$  which is not  productively countably
tight and player I does not have a winning strategy in
$\gone(\Omega_x, \Omega_x)$ for each $x\in X$. \end{proposition}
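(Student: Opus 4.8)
The plan is to reuse, almost verbatim, the construction from the immediately preceding proposition, observing that the example $X = C_p(Y)$ produced there already has all the properties we need. Concretely, I would again assume $cov(\mM) > \aleph_1$, fix a set $Y \subset \RR$ with $|Y| = \aleph_1$, and set $X = C_p(Y)$. The two things to verify are that $X$ is not productively countably tight and that player I has no winning strategy in $\gone(\Omega_x, \Omega_x)$ for every $x \in X$.

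For the first point, I would recall that every finite power of $Y$ is Lindel\"of (as $Y$ is a subspace of $\RR$ of cardinality $\aleph_1 < cov(\mM) \leq \c$, and such sets have the relevant covering property in all finite powers), so $X = C_p(Y)$ has countable tightness, and moreover $\chi(X) = |Y| = \aleph_1$. Since $Y_\delta$ fails to be Lindel\"of (a set of reals of size $\aleph_1$ is not a $Q$-set-type object here, and more to the point $Y_\delta$ is an uncountable discrete-like space that is not Lindel\"of), Theorem \ref{Uspenskii} gives that $C_p(X)$, hence the relevant product behavior, shows $X$ is \emph{not} productively countably tight. This is exactly the conclusion drawn in the preceding proposition, so I simply transport it.

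For the second point, I would invoke Proposition \ref{tight cov}: since $X$ has countable tightness and $\chi(x, X) \leq \chi(X) = \aleph_1 < cov(\mM)$ for every $x \in X$, player I does not have a winning strategy in $\gone(\Omega_x, \Omega_x)$ at any point $x$. Combining the two points yields a Tychonoff space $X$ that is not productively countably tight yet in which player I lacks a winning strategy in the point-open game at every point, which is precisely the assertion of the proposition. The proof is therefore essentially a citation of the previous argument together with Proposition \ref{tight cov}.

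The only genuine subtlety — and the step I would watch most carefully — is the verification that $\chi(X) = \aleph_1$ and that $X$ has countable tightness, i.e. that the choice $|Y| = \aleph_1$ interacts correctly with $cov(\mM)$. The character of $C_p(Y)$ equals the cardinality of a network/dense-in-itself base tied to $Y$, and the countable tightness of $C_p(Y)$ is equivalent (by Arhangel'skii–Pytkeev) to all finite powers of $Y$ being Lindel\"of; both facts hold for a size-$\aleph_1$ subset of $\RR$ under $cov(\mM) > \aleph_1$, but the bookkeeping must be stated cleanly. Once those cardinal computations are pinned down, the remainder follows formally, so I expect no obstacle beyond transcribing the previous proposition's reasoning.
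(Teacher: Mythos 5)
Your proposal matches the paper's intended argument exactly: the paper proves this proposition by remarking that ``the same proof of the above result yields also the following,'' i.e.\ the space $X=C_p(Y)$ with $Y\subset\RR$ of size $\aleph_1$ is not productively countably tight by Theorem \ref{Uspenskii} (since $Y_\delta$ is discrete and uncountable, hence not Lindel\"of), while Proposition \ref{tight cov} applied with $\chi(X)=\aleph_1<cov(\mM)$ gives that player I has no winning strategy at any point. Your write-up is correct and follows essentially the same route.
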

We finish   with some  results, obtained  with the help of
Pixley-Roy hyperspaces. Recall that  the Pixley-Roy hyperspace
$PR(X)$ over  a space $X$ consists of the set of all non-empty
finite subsets of $X$ with the topology generated by   the base
$\{[A,U] :A\in PR(X), U$ open in $X\}$, where $[A,U]=\{B\in
PR(X) :A\subset B\subset U\}$.

A family $\mathcal {S}$ of non-empty subsets of a space $X$ is a
$\pi$-network at a point $x\in X$ if  every neighbourhood of $x$
contains an element of $\mathcal {S}$.

For a given space $X$ and a point  $x\in X$,
let us denote by $\pi\mathcal {N}_x$ the collection of all
$\pi$-networks
at
$x$   consisting of finite sets.

With only minor modifications in the proof of Theorem 1.1 in
\cite{bella} (just replacing $\gfin$ with $\gone$), we may prove
the following:
\begin{theorem} \label{bella}  Let $X$ be a space, $A=\{x_1,
\ldots, x_k\}\in PR(X)$ and $p=(x_1,\ldots,x_k)\in X^k$. Then,
player  II has a winning strategy in $\gone(\Omega_A, \Omega_A)$,
played  on $PR(X)$, if and only if player II has a winning
strategy  in $\gone(\pi\mathcal {N}_p,\pi\mathcal {N}_p)$, played
on
$X^k$.
\end{theorem}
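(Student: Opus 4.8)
The plan is to prove both implications by translating the game structure on $PR(X)$ into the corresponding game structure on $X^k$ via the natural correspondence between $\pi$-networks. First I would establish the key dictionary: a finite collection $\mathcal{S}$ of finite subsets of $X$ is a $\pi$-network at $p=(x_1,\ldots,x_k)$ in $X^k$ precisely when the associated family of elements of $PR(X)$ forms a member of $\Omega_A$ (relative to the basic open sets $[A,U]$) at the point $A=\{x_1,\ldots,x_k\}$. Concretely, given an open $U\supset A$ in $X$, the basic neighborhood $[A,U]$ of $A$ in $PR(X)$ corresponds to a product of neighborhoods of the $x_i$ in $X^k$, and the condition ``$A\in\overline{\mathcal{C}}$ with $A\notin\mathcal{C}$'' for a family $\mathcal{C}$ in $PR(X)$ should match ``$\mathcal{S}$ is a $\pi$-network at $p$'' for the corresponding $\mathcal{S}$ in $X^k$. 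Getting this correspondence stated cleanly, and checking that it respects the relation $A\subset B\subset U$ built into the Pixley-Roy base, is the foundation everything else rests on.

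With the dictionary in hand, I would prove the forward direction: assume player II has a winning strategy $F$ in $\gone(\Omega_A,\Omega_A)$ on $PR(X)$ and construct a winning strategy $G$ in $\gone(\pi\mathcal{N}_p,\pi\mathcal{N}_p)$ on $X^k$. When player I plays a $\pi$-network $\mathcal{S}_n\in\pi\mathcal{N}_p$ in the $X^k$-game, I would translate it into an element of $\Omega_A$ for the $PR(X)$-game, feed it to $F$, obtain $F$'s response (an element of the $PR(X)$-family), and translate that response back to a member of $\mathcal{S}_n$ to define $G$'s move. Because $F$ is winning, the sequence of chosen elements in $PR(X)$ forms a $\pi$-network at $A$ (equivalently lies in $\Omega_A$), and the dictionary then forces the corresponding sequence in $X^k$ to be a $\pi$-network at $p$, so $G$ is winning. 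The reverse direction is entirely symmetric: a winning strategy on $X^k$ transports through the inverse translation to a winning strategy on $PR(X)$.

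The main obstacle I expect is bookkeeping at the translation step, not any deep idea. Specifically, the map from families in $PR(X)$ to families in $X^k$ is not a clean bijection: a single finite set $B\in PR(X)$ with $A\subset B\subset U$ encodes, for each point $x_i\in A$, a choice of neighborhood behavior, and one must verify that selecting a member of a $\pi$-network $\mathcal{S}_n$ genuinely corresponds to selecting a member of the matching $\Omega_A$-family and vice versa, uniformly in $n$, so that the strategies are well-defined on finite histories. One must also confirm that the extra points of $B$ beyond $A$ (the points in $B\setminus A$) do not interfere with the $\pi$-network condition at $p$, and that the ``$A\notin\mathcal{C}$'' requirement matches the requirement that the $\pi$-network consist of sets not containing $p$ as a limit trivially. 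Since the cited Theorem~1.1 of \cite{bella} carries out exactly this argument for $\gfin$, I would follow its structure verbatim, making only the replacement of finite selections by single selections at each inning; the combinatorial core of the $\gfin$ proof is precisely the translation lemma, which is insensitive to whether we pick one element or finitely many per round, so the adaptation to $\gone$ requires no new ideas beyond careful transcription.
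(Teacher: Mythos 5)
Your proposal matches the paper exactly: the paper gives no proof of this theorem beyond the remark that it follows from Theorem~1.1 of the cited work on Pixley-Roy hyperspaces ``with only minor modifications \dots (just replacing $\gfin$ with $\gone$),'' which is precisely the translation-and-transcription strategy you describe. Your additional discussion of the dictionary between $\Omega_A$ on $PR(X)$ and $\pi$-networks at $p$ on $X^k$ is a reasonable fleshing-out of what those modifications entail, and nothing in it conflicts with the paper's (implicit) argument.
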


 In \cite{bellasakai}, the second author and Sakai established
when a
Pixley-Roy  hyperspace is productively countably tight.

A space
$X$ is supertight at a point $x\in X$ provided that  for any
$\pi$-network $\mathcal {P}$   at $x$ consisting of countable
sets
there
exists a countable subfamily $\mathcal {Q}\subset \mathcal {P}$
which
is still a
$\pi$-network at $x$.
\begin{theorem} [Bella-Sakai, \cite{bellasakai}, Theorem 2.6]
\label{bellasakai}  A Pixley-Roy hyperspace
$PR(X)$ is productively countably tight if and only if $X$ is
supertight at every point.
\end{theorem}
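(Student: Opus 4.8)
The plan is to prove the biconditional in Theorem \ref{bellasakai} by unwinding both sides through the definitions, exploiting the tight correspondence between finite subsets of $X$ (the points of $PR(X)$) and the $\pi$-network structure on $X$. The key observation is that a point $A = \{x_1, \ldots, x_k\} \in PR(X)$ has a neighborhood base consisting of sets of the form $[A, U]$ with $U$ open, and that the closure behavior in $PR(X)$ near $A$ translates directly into $\pi$-network statements at the points $x_i$ in $X$. In fact, a family $\mathcal{S}$ of finite sets \emph{accumulating} to $A$ in $PR(X)$ corresponds precisely to a $\pi$-network at $A$ consisting of finite sets, once one passes to the appropriate traces; the reason is that $B \in [A, U]$ forces $A \subset B \subset U$, so membership in every basic neighborhood pins down both that $B$ contains $A$ and that $B$ is trapped inside arbitrarily small open $U \supset A$.

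First I would recall the Bella-Sakai framework and set up the dictionary between $\pi$-networks of countable sets at $x \in X$ and the countable-tightness structure of $PR(X)$ at $A$. Concretely, supertightness at $x$ asserts that any $\pi$-network of countable sets at $x$ admits a countable $\pi$-subnetwork; I would show that this is exactly the statement that $PR(X)$ has the appropriate countable-tightness-in-products property at singletons $\{x\}$, and then handle the general finite set $A$ by the product-over-coordinates argument encoded in Theorem \ref{bella}. The forward direction ($PR(X)$ productively countably tight $\Rightarrow$ $X$ supertight at each point) I would obtain by exhibiting, from a $\pi$-network $\mathcal{P}$ of countable sets at $x$ witnessing failure of supertightness, a countably tight space $Y$ such that $PR(X) \times Y$ fails countable tightness at the relevant point --- in the spirit of the $S_\c$ construction used in the proof of Theorem \ref{prod implies selection}. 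The reverse direction would run the analogous extraction: given countable tightness of $PR(X) \times Y$ for countably tight $Y$, supertightness at each point lets one select a countable $\pi$-subnetwork and assemble the desired countable witnessing subset in the product.

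The main obstacle I expect is managing the passage between the finite-set points $A = \{x_1, \ldots, x_k\}$ of $PR(X)$ with $k > 1$ and the single-point supertightness condition on $X$: the Pixley-Roy topology intertwines all $k$ coordinates through the constraint $A \subset B \subset U$, so a naive coordinate-by-coordinate reduction is not immediate. The resolution should mirror exactly the reduction already available in Theorem \ref{bella}, which reduces games on $PR(X)$ at $A$ to games on $X^k$ at $p = (x_1, \ldots, x_k)$; the same translation, applied to the selection (rather than game) version and to the supertightness condition, should let me reduce supertightness ``at $A$'' to the product of supertightness at each coordinate, and it is essentially this bookkeeping that Bella and Sakai carry out. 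Since the theorem is quoted verbatim from \cite{bellasakai} (Theorem 2.6), I would at this point simply cite their argument rather than reproduce the full coordinate analysis, noting that the forward implication uses a product-tightness counterexample built from a bad $\pi$-network and the reverse uses the countable-subnetwork extraction together with the neighborhood-base structure of $PR(X)$.
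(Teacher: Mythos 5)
The paper gives no proof of this statement at all: it is imported verbatim from Bella--Sakai as an external result, so the ``paper's own proof'' is simply the citation. Your proposal ultimately does the same thing---you defer the coordinate analysis to \cite{bellasakai}---so in effect you and the paper take the identical (degenerate) route, and that is acceptable here.

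One caution about the sketch you wrap around the citation: you conflate two different kinds of $\pi$-networks. The accumulation of a family $\mathcal{S}$ at $A$ in $PR(X)$ (via the basic neighborhoods $[A,U]$) yields a $\pi$-network of \emph{finite} sets, which is the object appearing in Theorem \ref{bella} and in the collection $\pi\mathcal{N}_p$; but supertightness is a statement about $\pi$-networks of \emph{countable} sets. The bridge between the two is not automatic---in the productive-tightness setting the countable sets arise as unions of countable subfamilies of $PR(X)$ (e.g.\ through Arhangel'skii's $\aleph_0$-singularity characterization, where the relevant objects are centered families of countable subsets), and your dictionary as stated does not account for this. If you were to actually carry out the argument rather than cite it, that is the step you would need to make precise; as a citation-plus-motivation, what you wrote is fine.
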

 From the equivalences in Theorems \ref{bella} and
\ref{bellasakai} and  Corollary \ref{game implies prod}, we
immediately get the following:
\begin{theorem} \label{final}  Let $X$ be a Tychonoff space  and
$x\in X$. If
player II has a winning strategy in the game
$\gone(\pi\mathcal {N}_x,\pi\mathcal {N}_x)$, then $X$ is
supertight
at $x$.
\end{theorem}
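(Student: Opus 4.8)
The plan is to read Theorem~\ref{final} as the $k=1$ instance of the correspondence in Theorem~\ref{bella} and then chain the three results assembled just above it. I would set $A=\{x\}\in PR(X)$ and $p=x\in X^1=X$, so that the hypothesis that player II has a winning strategy in $\gone(\pi\mathcal{N}_x,\pi\mathcal{N}_x)$ is precisely the right-hand side of the equivalence in Theorem~\ref{bella} for $k=1$, the game there being played on $X^1=X$.

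First I would apply Theorem~\ref{bella} in this instance to transport the strategy to the hyperspace: from the hypothesis it follows that player II has a winning strategy in $\gone(\Omega_A,\Omega_A)$ played on $PR(X)$, where $A=\{x\}$. Since $X$ is Tychonoff, $PR(X)$ is Tychonoff as well (indeed zero-dimensional), so Corollary~\ref{game implies prod} applies at the point $A$ and yields that $PR(X)$ is productively countably tight at $A=\{x\}$. These two steps are genuinely immediate, since both Theorem~\ref{bella} and Corollary~\ref{game implies prod} are pointwise statements and the points $\{x\}\in PR(X)$ and $x\in X$ match up under the correspondence.

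The remaining step, and the one requiring care, is to pass from productive countable tightness of $PR(X)$ at the single point $\{x\}$ to supertightness of $X$ at $x$. This is a local form of Theorem~\ref{bellasakai}, which as stated is global; the point to verify is that its ``only if'' direction localizes, i.e.\ that productive countable tightness of $PR(X)$ at $\{x\}$ alone already forces $X$ to be supertight at $x$. The reason this should go through is that the basic neighbourhoods of $\{x\}$ in $PR(X)$ are exactly the sets $[\{x\},U]$ with $x\in U$ open, so a $\pi$-network at $x$ in $X$ is read off from the local structure of $PR(X)$ at the single point $\{x\}$; feeding such a $\pi$-network into the Bella--Sakai argument against a sequential-fan test space (as $S_\c$ is used in the proof of Theorem~\ref{prod implies selection}) extracts a countable subfamily that is still a $\pi$-network at $x$, and also bridges the gap between the finite sets of $\pi\mathcal{N}_x$ and the countable sets appearing in the definition of supertightness. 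I expect this localization of Theorem~\ref{bellasakai} to be the only real obstacle; once it is recorded, the composition of the three implications completes the proof.
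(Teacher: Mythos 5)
Your proposal follows exactly the paper's own three-step chain: transport the strategy to $PR(X)$ via Theorem~\ref{bella}, apply Corollary~\ref{game implies prod} to get productive countable tightness of $PR(X)$ at $\{x\}$, and then invoke a localized version of the Bella--Sakai characterization (Theorem~\ref{bellasakai}) to conclude supertightness at $x$; the paper likewise flags this last localization as the only point needing verification. Your additional remark that $PR(X)$ is Tychonoff (zero-dimensional) is a correct and worthwhile detail that the paper leaves implicit.
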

\begin{proof} By Theorem \ref {bella}, player II has a winning
strategy in $\gone(\Omega_{\{x\}}, \Omega_{\{x\}})$ played on
$PR(X)$.  Then, by Corollary \ref{game implies prod}, $PR(X)$ is
productively countably tight at $\{x\}$. Finally, a closer
inspection  at  the proof of Theorem 2.6  in \cite{bellasakai}
shows that even a local version of it holds.  This suffices to
conclude that $X$ is supertight at $x$.
\end{proof}
This theorem, together with the fact that  any supertight space
is productively countably tight (see \cite{bellasakai}, Corollary
2.3),   may suggest:
\begin{question} \label{q3}  Let $X$ be a space, $x\in X$  and
assume that player II has a winning strategy in $\gone(\Omega_x,
\Omega_x)$.  Is it true that $X$ is supertight at $x$?
\end{question}


\def\cprime{$'$}

\end{document}